\providecommand{\U}[1]{\protect\rule{.1in}{.1in}}
\providecommand{\U}[1]{\protect\rule{.1in}{.1in}}
\providecommand{\U}[1]{\protect\rule{.1in}{.1in}}
\newtheorem{theorem}{Theorem}[section]
\newtheorem{proposition}[theorem]{Proposition}
\newtheorem{corollary}[theorem]{Corollary}
\newtheorem{example}[theorem]{Example}
\newtheorem{remark}[theorem]{Remark}
\newtheorem{lemma}[theorem]{Lemma}
\newtheorem{definition}[theorem]{Definition}
\newcommand{\Hhat}[1]{\stackrel{\begin{picture}(60,5)\put(0,0){\line(6,1){30}}\put(30,5){\line(6,-1){30}}\end{picture}}{#1}}
\newcommand{\hd}[1]{{\hat{d}}^}
\begin{document}

\title{Hypercyclicity of convolution operators on spaces of entire functions }
\author{F. J. Bertoloto\thinspace, G. Botelho\thanks{Supported by CNPq Grant
306981/2008-4 and INCT-Matem\'{a}tica.}\,\,, V. V.
F\'{a}varo\thanks{Supported by FAPEMIG Grant CEX-APQ-00208-09.\newline 2010 Mathematics Subject Classification: 32DXX, 47A16, 46G20.}\,\,, A.
M. Jatob\'{a}}
\date{}
\maketitle

\begin{abstract}
In this paper we use Nachbin's holomorphy types to generalize some recent results concerning hypercyclic convolution operators on Fr\'echet spaces of entire functions  of bounded type of infinitely many complex variables.

\end{abstract}

\section{Introduction}\label{sec1}
A mapping $f \colon X \longrightarrow X$, where $X$ is a
topological space, is {\it hypercyclic} if the set \linebreak
$\{x, f(x), f^2(x), \ldots\}$ is dense in $X$ for some $x \in X$.
In this case, $x$ is said to be a \emph{hypercyclic
vector for $f$}.

The study of hypercyclic translation and differentiation operators on spaces of entire
 functions of one complex variable can  be traced back to Birkhoff \cite{birkhoff}
 and MacLane \cite{maclane}. Godefroy and Shapiro \cite{godefroy} pushed these
  results quite further by proving that every convolution operator on spaces
  of entire functions of several complex variables which is not a scalar multiple of
  the identity is hypercyclic. Results on the hypercyclicity of convolution operators
  on spaces of entire functions of infinitely many complex variables appeared
  later (see, e.g., \cite{aron, andre, peterssonpascal,peterssonjmaa}).
   Recently, Carando, Dimant and Muro \cite{CDSjmaa} proved some far-reaching
   results - including a solution to a problem posed in \cite{aronmarkose} -
    that encompass as particular cases several of the above mentioned results. The main tool they use are the so-called {\it coherent sequences of homogeneous polynomials}, introduced by themselves in \cite{nachri} based on properties of polynomials ideals previously studied in \cite{indagationes, studia}.

The aim of this paper is to generalize the results of \cite{CDSjmaa}. We accomplish this task by proving results (Theorems \ref{main1} and \ref{main2}) of which the main results of {\cite{CDSjmaa} (\cite[Theorem 4.3]{CDSjmaa} and \cite[Corollary 4.4]{CDSjmaa}) are particular cases. Furthermore we give some concrete examples (Example \ref{exemplos}) that are covered by our results but not by the results of \cite{CDSjmaa}. Being strictly more general than the results of \cite{CDSjmaa}, our results also generalize the ones first generalized by \cite{CDSjmaa}.

Our approach differs from the approach of \cite{CDSjmaa} in our use of holomorphy types (in the sense of Nachbin \cite{Nachbin}) instead of coherent sequences of polynomials. More precisely, we use the $\pi_1$-$\pi_2$-holomorphy types introduced by the third and fourth authors in \cite{favaro-jatoba1}. Although we already knew that $\pi_1$-$\pi_2$-holomorphy types could be used in this context, it was only reading \cite{CDSjmaa} that we realized that the original definitions could be refined (see Definition \ref{pi-tipo de holomorfia}) to prove such general results on the hypercyclicity of convolution operators on spaces of entire functions.  Holomorphy types are a somewhat old-fashioned topic in infinite-dimensional analysis, so it is quite surprising that our holomorphy type-oriented-approach turned out to be more effective than the coherent sequence-oriented-approach.

The paper is organized as follows: in Section 2 we state our main results, in Section 3 we prove that our results are more general - not only formally but also concretely - than the results of \cite{CDSjmaa}, and in Section 4 we prove our main results. In Section 5 we extend to our context some related results that appeared in the literature, including results on surjective hypercyclic convolution operators and connections with the existence of dense subspaces formed by hypercyclic functions for convolution operators.

Throughout the paper $\mathbb{N}$ denotes the set of positive integers and
$\mathbb{N}_{0}$ denotes the set $\mathbb{N}\cup\{0\}$. The letters $E$ and
$F$ will always denote complex Banach spaces and $E^{\prime}$ represents the
topological dual of $E$. The Banach space of all continuous $m$-homogeneous
polynomials from $E$ into $F$ endowed with its usual sup norm is denoted by $\mathcal{P}(^{m}E;F)$. The subspace of $\mathcal{P}(^{m}E;F)$ of all polynomials of finite type is represented by $\mathcal{P}_f(^{m}E;F)$. The linear
space of all entire mappings from $E$ into $F$ is denoted by $\mathcal{H}%
(E;F)$. When $F=\mathbb{C}$ we write $\mathcal{P}(^{m}E)$, $\mathcal{P}_f(^{m}E)$  and $\mathcal{H}(E)$
instead of $\mathcal{P}(^{m}E;\mathbb{C})$, $\mathcal{P}_f(^{m}E;\mathbb{C})$  and $\mathcal{H}(E;\mathbb{C})$, respectively. For the general theory of homogeneous polynomials and holomorphic functions we refer to Dineen \cite{dineen} and Mujica \cite{mujica}.

\section{Main results}
In this section we state the main results of the paper and give the definitions needed to understand them.

\begin{definition}\rm (Nachbin \cite{Nachbin})
\label{Definition tipo holomorfia} A \emph{holomorphy type} $\Theta$
from $E$ to $F$ is a sequence of Banach spaces $(\mathcal{P}_{\Theta}%
(^{m}E;F))_{m=0}^{\infty}$, the norm on each of them being denoted by
$\|\cdot\|_{\Theta}$, such that the following conditions hold true:

\item[$(1)$] Each $\mathcal{P}_{\Theta}(^{m}E;F)$ is a linear
subspace of $\mathcal{P}(^{m}E;F)$.

\item[$(2)$] $\mathcal{P}_{\Theta}(^{0}E;F)$ coincides with
$\mathcal{P}(^{0}E;F)=F$ as a normed vector space.

\item[$(3)$] There is a real number $\sigma\geq1$ for which the
following is true: given any $k\in\mathbb{N}_{0}$, $m\in\mathbb{N}_{0}$,
$k\leq m $, $a\in E$ and $P\in\mathcal{P}_{\Theta}(^{m}E;F)$, we have
\[
\hat{d}^{k}P(a)\in\mathcal{P}_{\Theta}(^{k}E;F) {\rm ~~and}
\]
\[
\left\|  \frac{1}{k!}\hat{d}^{k}P(a)\right\|  _{\Theta}\leq\sigma
^{m}\|P\|_{\Theta}\|a\|^{m-k}.
\]
\end{definition}
\noindent It is plain that each inclusion $\mathcal{P}_{\Theta}(^{m}E;F)\subseteq
\mathcal{P}(^{m}E;F)$ is continuous and that $\|P\|\leq\sigma^{m}\|P\|_{\Theta}$ for every $P\in\mathcal{P}_{\Theta}(^{m}E;F)$.

\begin{definition}\rm (Gupta \cite{apostilagupta, gupta})
\label{Definition f holomorfia} Let $(\mathcal{P}_{\Theta}%
(^{m}E;F))_{m=0}^{\infty}$ be a holomorphy type from $E$ to $F$. A given
$f\in\mathcal{H}(E;F)$ is said to be of \emph{$\Theta$-holomorphy type of
bounded type} if

\item[$(1)$] $\frac{1}{m!}\hat{d}^{m}f(0)\in\mathcal{P}_{\Theta}%
(^{m}E;F)$ for all $m\in\mathbb{N}_{0}$,

\item[$(2)$] $\displaystyle\lim_{m \rightarrow\infty} \left(  \frac{1}{m!}%
\|\hat{d}^{m}f(0)\|_{\Theta}\right)  ^{\frac{1}{m}}=0.$

\noindent The linear subspace of $\mathcal{H}(E;F)$ of all functions $f$ of $\Theta
$-holomorphy type of bounded type is denoted by $\mathcal{H}_{\Theta b}(E;F)$.

\end{definition}

\begin{remark}\rm (a) The inequality $\|\cdot\|\leq\sigma^{m}\|\cdot\|_{\Theta}$ implies
that each entire mapping $f$ of $\Theta$-holomorphy type of bounded type is an
entire mapping of bounded type in the sense of Gupta in
\cite{gupta}, that is, $f$ is bounded on bounded subsets of $E$. \\
(b) It is clear that $\mathcal{P}_{\Theta}(^{m}E;F) \subseteq
\mathcal{H}_{\Theta b}(E;F)$ for each $m\in\mathbb{N}_{0}$.
\end{remark}


For each $\rho>0,$ condition (2) of Definition \ref{Definition f holomorfia} guarantees that the correspondence
\[
f\in\mathcal{H}_{\Theta b}(E;F)\mapsto \Vert f\Vert_{\Theta,\rho}%
=\sum_{m=0}^{\infty}\frac{\rho^{m}}{m!}\Vert\hat{d}^{m}f(0)\Vert_{\Theta
}<\infty\]
is a well defined seminorm on $\mathcal{H}_{\Theta
b}(E;F)$.
We shall henceforth consider $\mathcal{H}_{\Theta b}(E;F)$ endowed with the locally convex topology generated by the
seminorms $\Vert\cdot\Vert_{\Theta,\rho},$ $\rho>0.$ This topology shall be denoted
by $\tau_{\Theta}$. It is well known that
$(\mathcal{H}_{\Theta b}(E;F),\tau_{\Theta})$ is a
Fr\'{e}chet space (see, e.g, \cite[Proposition 2.3]{favaro-jatoba1}).

Next definitions are refinements of the concepts of $\pi_1$-holomorphy type and $\pi_2$-holomorphy type introduced in \cite{favaro-jatoba1}.


\begin{definition}\rm (a)
\label{pi-tipo de holomorfia} A holomorphy type $(\mathcal{P}_{\Theta}%
(^{m}E;F))_{m=0}^{\infty}$ from $E$ to $F$ is said to be a \emph{$\pi_{1}$-holomorphy type} if the following conditions hold:

\item[(a1)] Polynomials of finite type belong to $(\mathcal{P}_{\Theta}%
(^{m}E;F))_{m=0}^{\infty}$ and there exists $K>0$ such that $$\Vert\phi^{m}\cdot b\Vert_{\Theta
}\leq K^{m}\Vert\phi\Vert^{m}\cdot\Vert b\Vert$$ for all $\phi\in E^{\prime}$,
$b\in F$ and $m\in\mathbb{N}$;

\item[(a2)] For each $m\in\mathbb{N}_{0}$, $\mathcal{P}_{f}(^{m}E;F)$ is dense in $(\mathcal{P}_{\Theta}(^{m}E;F),\Vert\cdot\Vert_{\Theta})$.

\medskip

\noindent (b) A holomorphy type $(\mathcal{P}_{\Theta}(^{m}E))_{m=0}^{\infty}$ from $E$ to $\mathbb{C}$ is said to be
a \emph{$\pi_{2}$-holomorphy type} if for each $T\in\left[
\mathcal{H}_{\Theta b}(E)\right]  ^{\prime}$, $m\in\mathbb{N}_{0}$ and
$k\in\mathbb{N}_{0},$ $k\leq m$, the following conditions hold:
\item[(b1)] If $P\in
\mathcal{P}_{\Theta}(^{m}E)$ and $A \colon E^m \longrightarrow \mathbb{C}$ is the unique continuous symmetric $m$-linear mapping such that $P=\hat{A},$ then the
$\left(  m-k\right)  $-homogeneous polynomial%
\begin{align*}
T\left(  \widehat{A(\cdot)^{k}}\right)  \colon E &  \longrightarrow
\mathbb{C}\\
y &  \mapsto T\left(  A(\cdot)^{k}y^{m-k}\right)
\end{align*}
belongs to $\mathcal{P}_{\Theta}(^{m-k}E);$
\item[(b2)] For constants $C, \rho > 0$ such that
\[
\left\vert T\left(  f\right)  \right\vert \leq C\left\Vert f\right\Vert
_{\Theta,\rho}
{\rm ~for~every~}f\in\mathcal{H}_{\Theta b}(E),\]
which exist because $T\in\left[
\mathcal{H}_{\Theta b}(E)\right]  ^{\prime}$, there is a constant $K>0$ such that \[
\Vert T(\widehat{A(\cdot)^{k}})\Vert_{\Theta}\leq C\cdot K^{m}\rho^{k}\Vert
P\Vert_{\Theta} {\rm ~for~ every~}
P\in\mathcal{P}_{\Theta}(^{m}E).\]

\end{definition}

\begin{definition}\rm Let $\Theta$ be a holomorphy type from $E$ to $\mathbb{C}$. \\
(a) For $a\in E$ and
$f\in\mathcal{H}_{\Theta b}(E)$, the {\it translation of $f$ by $a$} is defined by
$$\tau_a f \colon E \longrightarrow \mathbb{C}~,~\left(  \tau_{a}f\right)  \left(  x\right)  =f\left(
x-a\right)  .$$ By \cite[Proposition 2.2]{favaro-jatoba1} we have $\tau_{a}f\in\mathcal{H}_{\Theta
b}(E).$\\
(b) A continuous linear operator $L\colon\mathcal{H}_{\Theta
b}(E)\longrightarrow\mathcal{H}_{\Theta b}(E)$ is called a \emph{convolution
operator on }$\mathcal{H}_{\Theta b}(E)$ if it is translation invariant, that
is,
$$L(\tau_{a}f)=\tau_{a}(L(f))$$
for all $a\in E$ and $f\in\mathcal{H}_{\Theta b}(E).$\\
(c) For each functional $T \in [\mathcal{H}_{\Theta b}(E)]^{\prime}$, the operator $\bar{\Gamma}_{\Theta}(T)$ is defined by
$$\bar{\Gamma}_{\Theta}(T)\colon
\mathcal{H}_{\Theta b}(E)\longrightarrow \mathcal{H}_{\Theta
b}(E)~,~\bar{\Gamma}_{\Theta}(T)(f)=T\ast
f, $$
where the {\it convolution product} $T\ast f$ is defined by
\[
\left(  T\ast f\right)  \left(  x\right)  =T\left(  \tau_{-x}f\right)
{\rm ~for~ every~} x\in E.\]
(d) $\delta_0\in[\mathcal{H}_{\Theta b}(E)]^\prime$ is the linear functional defined by
$$\delta_0\colon\mathcal{H}_{\Theta b}(E)\longrightarrow\mathbb{C}~,~\delta_0(f)=f(0).$$
\end{definition}

The main results of this paper read as follows:

\begin{theorem}\label{main1} Let $E^{\prime}$ be
separable and $(\mathcal{P}_{\Theta}(^{m}E))_{m=0}^{\infty}$ be a
$\pi_1$-holomorphy type from $E$ to $\mathbb{C}$. Then every convolution operator
on $\mathcal{H}_{\Theta b}(E)$ which is not a scalar multiple of the
identity is hypercyclic.
\end{theorem}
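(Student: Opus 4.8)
The plan is to realize $L$ as a convolution operator of the form $\bar{\Gamma}_{\Theta}(T)$ and then to apply the Godefroy--Shapiro eigenvalue criterion, using the exponential functions as a reservoir of eigenvectors. First I would check that every convolution operator $L$ is of the form $\bar{\Gamma}_{\Theta}(T)$ with $T=\delta_0\circ L\in[\mathcal{H}_{\Theta b}(E)]'$: by translation invariance, $(T\ast f)(x)=L(\tau_{-x}f)(0)=\tau_{-x}(Lf)(0)=(Lf)(x)$. For $\phi\in E'$ set $e_\phi(x)=e^{\phi(x)}$. Condition (a1) of Definition \ref{pi-tipo de holomorfia} gives $\|\tfrac{1}{m!}\phi^m\|_\Theta\le\tfrac{1}{m!}K^m\|\phi\|^m$, whose $m$-th roots tend to $0$, so $e_\phi=\sum_m\tfrac{1}{m!}\phi^m\in\mathcal{H}_{\Theta b}(E)$. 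Since $\tau_{-x}e_\phi=e^{\phi(x)}e_\phi$, one computes $Le_\phi=T\ast e_\phi=T(e_\phi)\,e_\phi$, so every $e_\phi$ is an eigenvector of $L$ with eigenvalue $\widehat{L}(\phi):=T(e_\phi)$.

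The heart of the argument is the density lemma: for every nonempty open $U\subseteq E'$, the linear span of $\{e_\phi:\phi\in U\}$ is dense in $\mathcal{H}_{\Theta b}(E)$. By Hahn--Banach it suffices to show that $S\in[\mathcal{H}_{\Theta b}(E)]'$ vanishing on all $e_\phi$, $\phi\in U$, must be zero. The map $\phi\mapsto S(e_\phi)=\sum_m\tfrac{1}{m!}S(\phi^m)$ is G\^{a}teaux-holomorphic on $E'$ (on each finite-dimensional slice the series converges locally uniformly by the term-by-term estimates coming from (a1) and the continuity of $S$), so vanishing on $U$ forces it to vanish on all of $E'$ by the identity theorem applied along complex lines. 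Differentiating $t\mapsto S(e_{t\phi})$ at $t=0$ yields $S(\phi^m)=0$ for all $\phi\in E'$ and $m\in\mathbb{N}_0$, and polarization shows $S$ vanishes on every finite-type polynomial. By the density condition (a2), $S$ vanishes on each $\mathcal{P}_\Theta(^{m}E)$; since the Taylor partial sums of any $f\in\mathcal{H}_{\Theta b}(E)$ converge to $f$ in $\tau_\Theta$, we get $S=0$. I expect this lemma --- precisely the passage from ``vanishing on $U$'' to ``$S\equiv 0$'', which is exactly where both $\pi_1$-conditions are consumed --- to be the main obstacle.

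Next I would pin down the non-triviality. Taking $U=E'$ in the density lemma shows that $L=cI$ if and only if $\widehat{L}\equiv c$, so $L$ fails to be a scalar multiple of the identity precisely when $\widehat{L}$ is non-constant. Choosing $\phi_1,\phi_2$ with $\widehat{L}(\phi_1)\neq\widehat{L}(\phi_2)$, the one-variable entire function $h(t)=\widehat{L}(\phi_1+t(\phi_2-\phi_1))$ is non-constant. If $|h|\le 1$ everywhere then $h$ is constant by Liouville, and if $|h|\ge 1$ everywhere then $h$ is nowhere zero and $1/h$ is a bounded entire function, again constant; both contradict non-constancy. Hence both sets $A=\{\phi:|\widehat{L}(\phi)|<1\}$ and $B=\{\phi:|\widehat{L}(\phi)|>1\}$ are nonempty, and they are open because $\widehat{L}$ is continuous (the estimate from (a1) makes $\phi\mapsto e_\phi$ continuous into $\mathcal{H}_{\Theta b}(E)$).

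Finally I would invoke the Godefroy--Shapiro criterion in its version for separable Fr\'echet spaces. The space $\mathcal{H}_{\Theta b}(E)$ is Fr\'echet, as recalled in the excerpt, and it is separable since $E'$ is separable and finite-type polynomials are $\tau_\Theta$-dense. The family $\{e_\phi:\phi\in A\}$ consists of eigenvectors with eigenvalues in the open unit disk and spans a dense subspace, while $\{e_\phi:\phi\in B\}$ consists of eigenvectors with eigenvalues of modulus greater than $1$ and also spans a dense subspace. The criterion then yields that $L$ is hypercyclic (indeed mixing), completing the proof.
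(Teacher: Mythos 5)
Your proof is correct and is essentially the paper's own argument: exponential functions as eigenvectors of $L$, density of $\mathrm{span}\{e^{\phi}:\phi\in U\}$ for nonempty open $U\subseteq E'$ via Hahn--Banach and the identity theorem (you simply inline the injectivity of the Borel transform, which the paper cites as Proposition \ref{Propo isomorfismo Hthetab}), and Liouville's theorem to make both eigenvalue regions nonempty open sets. The only difference is packaging: where you invoke the Godefroy--Shapiro eigenvalue criterion for separable Fr\'echet spaces as a black box, the paper verifies Kitai's Hypercyclicity Criterion by hand, using the linear independence of the exponentials (its Proposition \ref{Linearmente indep}) to define the right inverse $S$ on $\mathrm{span}\{e^{\phi}:\phi\in W\}$.
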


\begin{theorem}\label{main2}
Let $E^{\prime}$ be
separable, $(\mathcal{P}_{\Theta}(^{m}E))_{m=0}^{\infty}$ be a
$\pi_1$-$\pi_2$-holomorphy type and $T\in[\mathcal{H}_{\Theta
b}(E)]^{\prime}$ be a linear functional which is not a scalar multiple of $\delta_0$. Then $\bar{\Gamma}_{\Theta}(T)$ is a convolution operator that is not a scalar multiple of the identity, hence hypercyclic.
\end{theorem}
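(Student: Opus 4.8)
The statement packages three claims: that $\bar{\Gamma}_{\Theta}(T)$ is a convolution operator, that it is not a scalar multiple of the identity, and that it is consequently hypercyclic. The last is immediate once the first two are in hand, since a $\pi_1$-$\pi_2$-holomorphy type is in particular a $\pi_1$-holomorphy type, so Theorem \ref{main1} applies directly. The second is short as well: for every $f\in\mathcal{H}_{\Theta b}(E)$ one has $\tau_0 f=f$, hence $(T\ast f)(0)=T(\tau_{-0}f)=T(f)$. Thus if $\bar{\Gamma}_{\Theta}(T)=\lambda I$ for some scalar $\lambda$, evaluating $T\ast f=\lambda f$ at the origin yields $T(f)=\lambda f(0)=\lambda\delta_0(f)$ for all $f$, i.e. $T=\lambda\delta_0$; the contrapositive is exactly the assertion. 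Translation invariance is equally formal: unwinding the definitions, both $\bar{\Gamma}_{\Theta}(T)(\tau_a f)(x)$ and $\tau_a(\bar{\Gamma}_{\Theta}(T)f)(x)$ reduce to $T\bigl(y\mapsto f(y+x-a)\bigr)$, so $L(\tau_a f)=\tau_a(Lf)$.

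The real content, and the main obstacle, is to prove that $\bar{\Gamma}_{\Theta}(T)$ is a well-defined continuous linear operator of $\mathcal{H}_{\Theta b}(E)$ into itself; this is precisely where the two $\pi_2$-conditions of Definition \ref{pi-tipo de holomorfia} enter. The plan is to compute the Taylor coefficients of $T\ast f$. Writing $P_m=\frac{1}{m!}\hat{d}^{m}f(0)=\widehat{A_m}$, with $A_m$ the associated symmetric $m$-linear form, the binomial expansion gives $P_m(y+x)=\sum_{k=0}^{m}\binom{m}{k}A_m(y^k x^{m-k})$. Applying $T$ in the variable $y$ and invoking condition (b1), each term $x\mapsto T\bigl(y\mapsto A_m(y^k x^{m-k})\bigr)$ is an $(m-k)$-homogeneous polynomial belonging to $\mathcal{P}_{\Theta}(^{m-k}E)$; collecting terms of equal degree $j=m-k$ produces the candidate coefficients
\[
\tfrac{1}{j!}\hat{d}^{j}(T\ast f)(0)=\sum_{m\ge j}\binom{m}{j}\,T\bigl(\widehat{A_m(\cdot)^{\,m-j}}\bigr)\in\mathcal{P}_{\Theta}(^{j}E).
\]

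To control these I would invoke condition (b2): with the constants $C,\rho$ furnished by $T\in[\mathcal{H}_{\Theta b}(E)]^{\prime}$, one has $\Vert T(\widehat{A_m(\cdot)^{\,m-j}})\Vert_{\Theta}\le C K^{m}\rho^{\,m-j}\Vert P_m\Vert_{\Theta}$. Summing the seminorm $\Vert T\ast f\Vert_{\Theta,s}=\sum_j s^{j}\Vert\frac{1}{j!}\hat{d}^{j}(T\ast f)(0)\Vert_{\Theta}$ and interchanging the order of summation, the inner binomial sum collapses by the binomial theorem to $(s+\rho)^{m}$, giving the single estimate
\[
\Vert T\ast f\Vert_{\Theta,s}\le C\sum_{m=0}^{\infty}\bigl[K(s+\rho)\bigr]^{m}\Vert P_m\Vert_{\Theta}=C\,\Vert f\Vert_{\Theta,K(s+\rho)}.
\]
This one inequality does everything at once: its finiteness for every $s>0$ shows that $T\ast f$ is of $\Theta$-holomorphy type of bounded type (so $\bar{\Gamma}_{\Theta}(T)$ indeed maps $\mathcal{H}_{\Theta b}(E)$ into itself), while the domination of each seminorm of $T\ast f$ by a seminorm of $f$ yields continuity.

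To sidestep the delicate justification of applying $T$ term by term to the doubly-indexed expansion, I would first carry out the computation for $f$ a polynomial, where all sums are finite and the manipulations are unproblematic, establish the displayed estimate there, and then extend $\bar{\Gamma}_{\Theta}(T)$ to all of $\mathcal{H}_{\Theta b}(E)$ by continuity, using that polynomials are dense (their Taylor partial sums converge in $\tau_{\Theta}$); the extension agrees with the pointwise convolution because $f\mapsto T(\tau_{-x}f)$ is continuous for each fixed $x$. The crux, and the step most likely to hide technical friction, is exactly this passage: reconciling the pointwise definition of $T\ast f$ with the $\Vert\cdot\Vert_{\Theta,\rho}$-convergent expansion, and applying (b1)--(b2) with the correct bookkeeping of the binomial coefficients and of the two variables.
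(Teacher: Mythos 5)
Your proposal is correct and follows essentially the same route as the paper: the paper likewise reduces Theorem \ref{main2} to the fact that $T\ast$ is a well-defined continuous convolution operator on $\mathcal{H}_{\Theta b}(E)$ (its Proposition \ref{Teorema T* convolucao}, proved by exactly your device of expanding $\tau_{-x}f$ into Taylor terms, applying (b1)--(b2) to each term, and summing the $\Vert\cdot\Vert_{\Theta,s}$-seminorms), and then runs your same evaluation-at-zero argument to rule out $\bar{\Gamma}_{\Theta}(T)=\lambda I$ before invoking Theorem \ref{main1}. The only divergences are executional: you sum the binomial coefficients exactly, obtaining the cleaner bound $\Vert T\ast f\Vert_{\Theta,s}\le C\Vert f\Vert_{\Theta,K(s+\rho)}$, where the paper estimates $\binom{m+k}{k}\le 2^{m+k}$ and works with constants involving $2K\rho_{0}$, $\rho_{0}>\rho$; and you justify the termwise application of $T$ by a density-of-polynomials extension argument, where the paper instead imports the convergent expansion of $(T\ast f)(x)$ from \cite[Proposition 3.1]{favaro-jatoba1} --- both are legitimate.
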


\section{Comparison with known results}
Before proceeding to the proofs we shall establish that Theorems \ref{main1} and \ref{main2} are strictly more general than \cite[Theorem 4.3]{CDSjmaa} and \cite[Corollary 4.4]{CDSjmaa}, respectively. First of all we have to give the definitions needed to understand these results from \cite{CDSjmaa}.

\begin{definition}\rm For $P\in \mathcal{P}(^kE)$, $a\in E$ and $r\in\mathbb{N}$, $P_{a^r}$ denotes the
$(k-r)$-homogeneous polynomial on $E$ defined by

$$P_{a^r}(x)=A(\underbrace{a,\ldots,a}_{r\, \rm{times}},x,\ldots,x),$$
where, as before, $A$ stands for the continuous symmetric $k$-linear form such that $P(x) = A(x,\ldots, x)$ for every $x \in E$.
\end{definition}

\begin{definition}\label{coherent sequence}\rm (Carando, Dimant, Muro \cite{nachri, CDSjmaa}) For each $k \in \mathbb{N}_0$, $\mathfrak{A}_k(E)$ and $\mathfrak{B}_k(E)$ are linear subspaces of ${\cal P}(^k E)$ containing the polynomials of finite type which are Banach spaces with norms $\| \cdot \|_{\mathfrak{A}_k(E)}$ and $\| \cdot \|_{\mathfrak{B}_k(E)}$, respectively. $\mathfrak{A}_k(E)$ and ${\mathfrak{B}_k(E)}$ are also asked to be continuously contained in ${\cal P}(^k E)$.

\label{coherent} A sequence $\mathfrak{A}\left( E\right) =\left\{
\mathfrak{A}_{k}\left(  E\right)  \right\} _{k \in \mathbb{N}_0}$ is said to be {\rm a coherent sequence of homogeneous polynomials} if there exist positive constants $C$ and $D$ such
that the following conditions hold for all $k$:

\item[(a)] For each $P\in\mathfrak{A}_{k+1}\left(  E\right)  $ and $a\in E$,
$P_{a}\in\mathfrak{A}_{k}\left(  E\right)  $ and
\[
\Vert P_{a}\Vert_{\mathfrak{A}_{k}\left(  E\right)  }\leq C\Vert
P\Vert_{\mathfrak{A}_{k+1}\left(  E\right)  }\Vert a\Vert.
\]

\item[(b)] For each $P\in\mathfrak{A}_{k}\left(  E\right)  $ and $\gamma\in
E^{\prime}$, $\gamma P\in\mathfrak{A}_{k+1}\left(  E\right)  $ and
\[
\Vert\gamma P\Vert_{\mathfrak{A}_{k+1}\left(  E\right)  }\leq
D\Vert \gamma\Vert\Vert P\Vert_{\mathfrak{A}_{k}\left(  E\right)
}.
\]
As usual, for $k=0$, $\mathfrak{A}_{0}\left(  E\right) $ is the
$1$-dimensional space of constant functions on $E$, that is $\mathfrak{A}_{0}\left(  E\right) = \mathbb{C}.$

The coherent sequence $\mathfrak{A}\left(  E\right)  =\left\{  \mathfrak{A}%
_{k}\left(  E\right)  \right\}  _{k}$ is said to be {\it multiplicative} if
there exists $M>0$ such that $PQ\in\mathfrak{A}%
_{k+l}\left(  E\right)  $ and%
\[
\Vert PQ\Vert_{\mathfrak{A}_{k+l}\left(  E\right)  }\leq
M^{k+l}\Vert
P\Vert_{\mathfrak{A}_{k}\left(  E\right)  }\Vert Q\Vert_{\mathfrak{A}%
_{l}\left(  E\right)  },\]
whenever $P\in\mathfrak{A}_{k}\left(
E\right)  $ and
$Q\in\mathfrak{A}_{l}\left(  E\right) $.
\end{definition}

\begin{remark}\label{rema}\rm Note that the case $k=0$ implies that the constant $C$ of condition \ref{coherent}(a) is
greater than or equal to $1$. From \cite[Theorem 3.2]{studia} it
follows that every coherent sequence $\left\{
\mathfrak{A}_{k}\left(  E\right)  \right\} _{k \in
\mathbb{N}_0}$is a holomorphy type with constant $\sigma = C$. So,
$$\Vert P\Vert\leq C^{k}\Vert P\Vert_{\mathfrak{A}%
_{k}\left(  E\right)  }$$ for all $P\in\mathfrak{A}_{k}\left(
E\right)  $ and $k \in \mathbb{N}_0$.
\end{remark}

Let $\mathfrak{A}\left(  E\right)  =\left\{
\mathfrak{A}_{k}\left(  E\right) \right\}  _{k}$ be a coherent sequence of homogeneous polynomials on $E$. Since $\mathfrak{A}\left(  E\right)$ is a holomorphy type by Remark \ref{rema}, we can consider the space  $\mathcal{H}_{\mathfrak{A}\left(  E\right) b}(E)$ of holomorphic functions of $\mathfrak{A}\left(  E\right)$-holomorphy type of
bounded type according to Definition
\ref{Definition f holomorfia}. Following the notation of \cite{CDSjmaa} we shall henceforth represent this space by the symbol $\mathcal{H}%
_{b\mathfrak{A}}(E)$. So $\mathcal{H}_{b\mathfrak{A}}(E)$ becomes a Fr\'{e}chet space with
the topology generated by the family of seminorms $\left\{  p_{\rho}\right\}
_{\rho>0}$, where%
\[
p_{\rho}\left(  f\right)
=\sum_{k=0}^{\infty}\frac{\rho^{k}}{k!}\Vert\hat
{d}^{k}f(0)\Vert_{\mathfrak{A}_{k}\left(  E\right)  },
\]
for $f\in\mathcal{H}_{b\mathfrak{A}}(E).$

\bigskip

Next we define the polynomial Borel transform in the context of coherent sequences:

\begin{definition}\rm Let $\mathfrak{A}\left(  E\right)  =\left\{
\mathfrak{A}_{k}\left(  E\right) \right\}  _{k}$ be a coherent
sequence. For each $k $ the \emph{polynomial Borel
transform} is defined by
$$B_{k}\colon\mathfrak{A}_{k}\left( E\right)
^{\prime}\longrightarrow\mathcal{P}\left(  ^{k}E^{\prime}\right) ~,~B_{k}\left(  \varphi\right)  \left(  \gamma\right)
=\varphi\left( \gamma^{k}\right)  .$$
From now on, the
expression $\mathfrak{A}_{k}\left(  E\right)  ^{\prime}=\mathfrak{B}%
_{k}\left(  E^{\prime}\right)  $ will always mean that the
polynomial Borel transform $B_{k}\colon\mathfrak{A}_{k}\left(
E\right)  ^{\prime }\longrightarrow\mathfrak{B}_{k}\left(
E^{\prime}\right)  $ is an isometric isomorphism.
\end{definition}


The main hypercyclicity results of \cite{CDSjmaa} are the following:

\begin{theorem}\label{cdm1}{\rm \cite[Theorem 4.3]{CDSjmaa}}
Suppose that $E^{\prime}$ is separable. Let
$\{\mathfrak{B}_{k}(E^{\prime})\}_{k}$ be a coherent sequence and
$\{\mathfrak{A}_{k}(E)\}_{k}$ be such that
$\mathfrak{A}_{k}(E)^{\prime }=\mathfrak{B}_{k}(E^{\prime})$ for
every $k$. Then, every convolution operator on
$\mathcal{H}_{b\mathfrak{A}}(E)$ which is not a scalar multiple
of the identity is hypercyclic.
\end{theorem}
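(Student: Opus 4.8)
The plan is to \emph{deduce} Theorem \ref{cdm1} from Theorem \ref{main1} by showing that the coherent-sequence hypotheses force $\{\mathfrak{A}_{k}(E)\}_{k}$ to be a $\pi_{1}$-holomorphy type from $E$ to $\mathbb{C}$. As a preliminary I would record that $\mathcal{H}_{b\mathfrak{A}}(E)$ is, by definition, the space $\mathcal{H}_{\mathfrak{A}(E)b}(E)$ attached to the holomorphy type $\mathfrak{A}(E)$, so that the two notions of convolution operator literally coincide; here $\mathfrak{A}(E)$ is a holomorphy type by Remark \ref{rema}, using that the duality $\mathfrak{A}_{k}(E)'=\mathfrak{B}_{k}(E')$ together with the coherence of $\{\mathfrak{B}_{k}(E')\}_{k}$ makes $\{\mathfrak{A}_{k}(E)\}_{k}$ coherent as well (a fact from the duality theory of coherent sequences in \cite{nachri, studia}). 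It then remains only to verify conditions (a1) and (a2) of Definition \ref{pi-tipo de holomorfia}(a); once this is done, Theorem \ref{main1} applies directly, the separability of $E'$ being a shared hypothesis, and gives the conclusion.

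For (a1), the inclusion $\mathcal{P}_{f}(^{m}E)\subseteq\mathfrak{A}_{m}(E)$ is part of Definition \ref{coherent}. I would obtain the norm estimate by iterating the raising inequality \ref{coherent}(b): beginning with $\mathfrak{A}_{0}(E)=\mathbb{C}$, in which the constant $1$ has norm $1$, and multiplying by $\phi\in E'$ one factor at a time yields
\[
\|\phi^{m}\|_{\mathfrak{A}_{m}(E)}\le D\|\phi\|\,\|\phi^{m-1}\|_{\mathfrak{A}_{m-1}(E)}\le\cdots\le D^{m}\|\phi\|^{m}.
\]
Since $F=\mathbb{C}$ here, the element $b$ is a scalar, so the required bound $\|\phi^{m}\cdot b\|_{\mathfrak{A}_{m}(E)}\le D^{m}\|\phi\|^{m}|b|$ follows with $K=D$.

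Condition (a2), the density of $\mathcal{P}_{f}(^{m}E)$ in $(\mathfrak{A}_{m}(E),\|\cdot\|_{\mathfrak{A}_{m}(E)})$, is the step I expect to be the main obstacle, and it is exactly where the duality hypothesis is used. By the Hahn--Banach theorem it suffices to show that every $\varphi\in\mathfrak{A}_{m}(E)'$ that annihilates $\mathcal{P}_{f}(^{m}E)$ must be zero. Polarization identifies $\mathcal{P}_{f}(^{m}E)$ with the linear span of the powers $\{\phi^{m}:\phi\in E'\}$, so $\varphi$ annihilates $\mathcal{P}_{f}(^{m}E)$ precisely when $\varphi(\phi^{m})=0$ for all $\phi\in E'$. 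By the definition of the polynomial Borel transform, $\varphi(\phi^{m})=B_{m}(\varphi)(\phi)$, so this condition says exactly that $B_{m}(\varphi)=0$ as a polynomial on $E'$. Since the hypothesis $\mathfrak{A}_{m}(E)'=\mathfrak{B}_{m}(E')$ asserts that $B_{m}$ is an isometric isomorphism, in particular injective, we conclude $\varphi=0$, which proves the density.

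With (a1) and (a2) in hand, $\{\mathfrak{A}_{k}(E)\}_{k}$ is a $\pi_{1}$-holomorphy type, and Theorem \ref{main1} yields that every convolution operator on $\mathcal{H}_{b\mathfrak{A}}(E)$ which is not a scalar multiple of the identity is hypercyclic. The heart of the argument is the density step above, whose only real input is the injectivity of the Borel transform supplied by the isometric-isomorphism hypothesis; the remaining verifications are routine iterations of the coherence inequalities.
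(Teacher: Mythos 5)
Your proposal is correct and follows the same overall reduction as the paper: Theorem \ref{cdm1} is deduced by showing that the duality hypotheses make $\{\mathfrak{A}_{k}(E)\}_{k}$ a $\pi_{1}$-holomorphy type (this is exactly the paper's Proposition \ref{coerente_pi1}) and then invoking Theorem \ref{main1}. Where you genuinely diverge is in how the two $\pi_{1}$ conditions are verified. For (a1) the paper works on the dual side: it combines the isometry $\Vert T\Vert_{\mathfrak{A}_{k}(E)^{\prime}}=\Vert B_{k}(T)\Vert_{\mathfrak{B}_{k}(E^{\prime})}$ with the estimate $\Vert B_{k}(T)\Vert\leq C^{k}\Vert B_{k}(T)\Vert_{\mathfrak{B}_{k}(E^{\prime})}$ from Remark \ref{rema} to obtain $\Vert\phi^{k}\Vert_{\mathfrak{A}_{k}(E)}\leq C^{k}\Vert\phi\Vert^{k}$, where $C$ is the coherence constant of $\{\mathfrak{B}_{k}(E^{\prime})\}_{k}$; you instead iterate the raising inequality of condition \ref{coherent sequence}(b) on the $\mathfrak{A}$ side, which is more elementary but presupposes that $\{\mathfrak{A}_{k}(E)\}_{k}$ is itself coherent --- a transfer both you and the paper import (the paper cites \cite[Proposition 2.5]{CDSjmaa}, and needs it in any case to conclude via Remark \ref{rema} that $\mathfrak{A}(E)$ is a holomorphy type, so the space $\mathcal{H}_{b\mathfrak{A}}(E)$ makes sense) --- and your route yields the constant $D$ of the dualized sequence rather than $C$; both are harmless for (a1). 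For (a2) the paper simply cites \cite[Lemma 2.1]{CDSjmaa}, whereas you reprove it: a functional $\varphi\in\mathfrak{A}_{m}(E)^{\prime}$ annihilating $\mathcal{P}_{f}(^{m}E)$ has $B_{m}(\varphi)=0$, hence $\varphi=0$ by the injectivity contained in the hypothesis $\mathfrak{A}_{m}(E)^{\prime}=\mathfrak{B}_{m}(E^{\prime})$; your use of polarization to identify $\mathcal{P}_{f}(^{m}E)$ with ${\rm span}\{\phi^{m}:\phi\in E^{\prime}\}$ is exactly what makes the Hahn--Banach step work, and this is in substance the proof of the cited lemma. Net effect: your version is self-contained where the paper leans on citations, correctly isolating the Borel-transform injectivity as the only essential use of the isometric-isomorphism hypothesis.
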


\begin{corollary}\label{cdm2}{\rm \cite[Corollary 4.4]{CDSjmaa}}
Suppose that $E^{\prime}$ is separable. Let
$\{\mathfrak{B}_{k}(E^{\prime})\}_{k}$ be a coherent
multiplicative sequence and $\{\mathfrak{A}_{k}(E)\}_{k}$ be such
that $\mathfrak{A}_{k}(E)^{\prime }=\mathfrak{B}_{k}(E^{\prime})$
for every $k$. For every
$\varphi\in [\mathcal{H}_{
b\mathfrak{A}}(E)]^{\prime}$ which is not a scalar multiple of
$\delta_0$, the operator
\begin{eqnarray*}
T_\varphi\colon
\mathcal{H}_{b\mathfrak{A}}(E)\longrightarrow \mathcal{H}_{b\mathfrak{A}}(E)~,~
T_\varphi(f)=\varphi\ast f,
\end{eqnarray*}
is hypercyclic.
\end{corollary}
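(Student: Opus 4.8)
The plan is to deduce Corollary \ref{cdm2} from Theorem \ref{main2}. By Remark \ref{rema} the coherent sequence $\mathfrak{A}(E)=\{\mathfrak{A}_k(E)\}_k$ is itself a holomorphy type, and it has already been observed that $\mathcal{H}_{b\mathfrak{A}}(E)=\mathcal{H}_{\mathfrak{A}(E)b}(E)$ with the same Fr\'echet topology; moreover $T_\varphi(f)=\varphi\ast f=\bar{\Gamma}_{\mathfrak{A}(E)}(\varphi)(f)$. Since $E'$ is separable and $\varphi$ is not a scalar multiple of $\delta_0$, Theorem \ref{main2} will apply verbatim \emph{provided} $\mathfrak{A}(E)$ is a $\pi_1$-$\pi_2$-holomorphy type. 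Thus the entire task reduces to checking conditions (a1), (a2), (b1), (b2) of Definition \ref{pi-tipo de holomorfia} for the sequence $\{\mathfrak{A}_k(E)\}_k$.

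For the $\pi_1$ part I would argue as follows. Condition (a1) holds because finite type polynomials lie in every $\mathfrak{A}_k(E)$ by definition, while the estimate $\|b\,\phi^m\|_{\mathfrak{A}_m(E)}\le D^m\|\phi\|^m|b|$ (here $b\in\mathbb{C}$) follows by iterating coherence axiom \ref{coherent}(b), starting from the constant $1\in\mathfrak{A}_0(E)=\mathbb{C}$; this gives (a1) with $K=D$. For the density condition (a2), recall that $\mathcal{P}_f(^kE)$ is the linear span of the powers $\phi^k$, $\phi\in E'$. If $\eta\in\mathfrak{A}_k(E)'$ annihilates $\mathcal{P}_f(^kE)$, then $B_k(\eta)(\phi)=\eta(\phi^k)=0$ for all $\phi\in E'$, so the polynomial $B_k(\eta)$ vanishes identically; since $B_k\colon\mathfrak{A}_k(E)'\to\mathfrak{B}_k(E')$ is an isomorphism, $\eta=0$, and Hahn--Banach yields the density of $\mathcal{P}_f(^kE)$ in $\mathfrak{A}_k(E)$.

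The heart of the argument is the $\pi_2$ part, and this is where multiplicativity is indispensable. Fix $\varphi\in[\mathcal{H}_{b\mathfrak{A}}(E)]'$ and constants $C,\rho>0$ with $|\varphi(f)|\le C\,p_\rho(f)$. Testing this on a $k$-homogeneous polynomial $P$, for which $p_\rho(P)=\rho^k\|P\|_{\mathfrak{A}_k(E)}$, gives $\|\varphi|_{\mathfrak{A}_k(E)}\|_{\mathfrak{A}_k(E)'}\le C\rho^k$, hence $\|B_k(\varphi)\|_{\mathfrak{B}_k(E')}\le C\rho^k$ by the isometry. Now consider the linear map $\Phi_k\colon P\mapsto\varphi(\widehat{A(\cdot)^k})$. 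On a single power $P=\psi^m$ one computes directly that $\Phi_k(\psi^m)=B_k(\varphi)(\psi)\,\psi^{m-k}$, so on the dense subspace $\mathcal{P}_f(^mE)$ the image of $\Phi_k$ consists of finite type $(m-k)$-homogeneous polynomials, which already gives (b1) on finite type polynomials. Testing against $\eta\in\mathfrak{A}_{m-k}(E)'$ and using this power computation, the adjoint $\Phi_k^*$ turns out to be multiplication by the polynomial $B_k(\varphi)\in\mathfrak{B}_k(E')$, in the sense that
\[
B_m(\Phi_k^*\eta)=B_k(\varphi)\cdot B_{m-k}(\eta)\qquad\text{for every }\eta.
\]
The \emph{multiplicativity} of $\{\mathfrak{B}_j(E')\}$ then guarantees that this product again lies in $\mathfrak{B}_m(E')$ with
\[
\|B_k(\varphi)\cdot B_{m-k}(\eta)\|_{\mathfrak{B}_m(E')}\le M^m\,\|B_k(\varphi)\|_{\mathfrak{B}_k(E')}\,\|B_{m-k}(\eta)\|_{\mathfrak{B}_{m-k}(E')}.
\]
Combining this with $\|B_k(\varphi)\|_{\mathfrak{B}_k(E')}\le C\rho^k$ and the isometries yields $\|\Phi_k\|=\|\Phi_k^*\|\le C M^m\rho^k$; extending $\Phi_k$ from the dense finite type polynomials then delivers (b1) for arbitrary $P\in\mathfrak{A}_m(E)$ and the estimate (b2) with $K=M$.

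I expect the adjoint identification $B_m(\Phi_k^*\eta)=B_k(\varphi)\,B_{m-k}(\eta)$ to be the main obstacle: one must match the partial-evaluation map $P\mapsto\widehat{A(\cdot)^k}$ against the duality, verify the formula on the powers $\psi^m$ where everything is explicit, and then argue that the continuous extensions agree on the whole space. Once $\mathfrak{A}(E)$ has been shown to be a $\pi_1$-$\pi_2$-holomorphy type, Theorem \ref{main2} immediately yields that $T_\varphi=\bar{\Gamma}_{\mathfrak{A}(E)}(\varphi)$ is a convolution operator which, not being a scalar multiple of the identity (as $\varphi$ is not a scalar multiple of $\delta_0$), is hypercyclic.
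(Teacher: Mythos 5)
Your top-level route coincides with the paper's: show that $\{\mathfrak{A}_k(E)\}_k$ is a $\pi_1$-$\pi_2$-holomorphy type and then quote Theorem \ref{main2} (the paper does exactly this, via Propositions \ref{coerente_pi1} and \ref{coerente_pi2}). Within that plan, your density argument for (a2) (Hahn--Banach plus injectivity of $B_k$) is correct and self-contained, and your power computation $\Phi_k(\psi^m)=B_k(\varphi)(\psi)\,\psi^{m-k}$ combined with the multiplicativity estimate is the right mechanism for the bound $CM^m\rho^k$ --- but, as written, only on finite type polynomials.

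The genuine gap sits exactly at the step you flag but do not carry out, and it is compounded by an unjustified assumption made at the outset. You call $\{\mathfrak{A}_k(E)\}_k$ ``the coherent sequence'' and iterate coherence axiom \ref{coherent}(b) for $\mathfrak{A}$ to get (a1); but the hypotheses only make $\{\mathfrak{B}_k(E')\}_k$ coherent, and the transfer of coherence across the duality $\mathfrak{A}_k(E)'=\mathfrak{B}_k(E')$ is a non-trivial result --- it is \cite[Proposition 2.5]{CDSjmaa}, which the paper invokes explicitly at the start of both Propositions \ref{coerente_pi1} and \ref{coerente_pi2} (it is needed even for $\mathcal{H}_{b\mathfrak{A}}(E)$ to be a holomorphy-type space at all). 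This missing ingredient is what prevents your $\pi_2$ argument from closing. Your extension procedure produces a bounded operator $\tilde\Phi_k\colon\mathfrak{A}_m(E)\to\mathfrak{A}_{m-k}(E)$ agreeing with $P\mapsto\varphi(\widehat{A(\cdot)^k})$ on $\mathcal{P}_f(^mE)$, but conditions (b1)--(b2) concern \emph{arbitrary} $P\in\mathfrak{A}_m(E)$, so you must prove $\tilde\Phi_k(P)=\varphi(\widehat{A(\cdot)^k})$ for every $P$. Taking finite type $Q_n\to P$ in $\mathfrak{A}_m$-norm, one has $\tilde\Phi_k(P)(y)=\lim_n\varphi((Q_n)_{y^{m-k}})$, and to identify this limit with $\varphi(P_{y^{m-k}})$ you need the functional $P\mapsto\varphi(P_{y^{m-k}})$ to be $\Vert\cdot\Vert_{\mathfrak{A}_m}$-continuous, i.e.\ you need partial evaluation $P\mapsto P_{y^{m-k}}$ to map $\mathfrak{A}_m(E)$ boundedly into $\mathfrak{A}_k(E)$: this is coherence axiom (a) for $\mathfrak{A}$, again \cite[Proposition 2.5]{CDSjmaa}. (Sup-norm convergence of $(Q_n)_{y^{m-k}}$, which does follow from polarization, is not enough, because $\varphi|_{\mathfrak{A}_k(E)}$ is only continuous for the finer norm $\Vert\cdot\Vert_{\mathfrak{A}_k(E)}$.) The paper sidesteps this entire difficulty by quoting \cite[Lemma 3.3]{CDSjmaa} (Proposition \ref{lemma_carando}), which is precisely the statement your adjoint argument is trying to reprove. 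So your outline is sound, but to become a proof it must either cite the coherence-transfer and partial-evaluation results of \cite{CDSjmaa} (at which point it reduces to the paper's proof) or actually prove them; as it stands, the key identification is assumed rather than established.
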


The next result proves that Theorem \ref{cdm1} is a particular case of Theorem \ref{main1}:

\begin{proposition}
\label{coerente_pi1}Let $\{\mathfrak{B}_{k}(E^{\prime})\}_{k}$ be a coherent sequence
and $\{\mathfrak{A}_{k}(E)\}_{k}$ be such that
$\mathfrak{A}_{k}(E)^{\prime
}=\mathfrak{B}_{k}(E^{\prime})$ for all $k$. Then $\{\mathfrak{A}%
_{k}(E)\}_{k}$ is a $\pi_{1}$-holomorphy type from $E$ to $\mathbb{C}$.
\end{proposition}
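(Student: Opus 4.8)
The plan is to verify directly the two defining conditions (a1) and (a2) of a $\pi_1$-holomorphy type, drawing only on the coherence estimates of Definition \ref{coherent sequence} and on the hypothesis that each polynomial Borel transform $B_k$ is an isometric isomorphism. The inclusion $\mathcal{P}_f(^kE)\subseteq\mathfrak{A}_k(E)$ demanded in both conditions is already built into Definition \ref{coherent sequence}, so the real work reduces to the multiplicative norm estimate of (a1) and the density of (a2).

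For (a1), which in the scalar case $F=\mathbb{C}$ reads $\|\phi^m\|_{\mathfrak{A}_m(E)}\leq K^m\|\phi\|^m$ for all $\phi\in E^{\prime}$, I would argue by induction on $m$ using coherence condition (b). Starting from $\phi^0=1\in\mathfrak{A}_0(E)=\mathbb{C}$ with $\|1\|_{\mathfrak{A}_0(E)}=1$, and writing $\phi^m=\phi\cdot\phi^{m-1}$ as the product of $\gamma=\phi\in E^{\prime}$ with $P=\phi^{m-1}\in\mathfrak{A}_{m-1}(E)$, condition (b) gives $\phi^m\in\mathfrak{A}_m(E)$ together with $\|\phi^m\|_{\mathfrak{A}_m(E)}\leq D\|\phi\|\,\|\phi^{m-1}\|_{\mathfrak{A}_{m-1}(E)}$. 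Iterating yields $\|\phi^m\|_{\mathfrak{A}_m(E)}\leq D^m\|\phi\|^m$, so (a1) holds with $K=D$. This step is routine.

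The crux is the density statement (a2): for each $m$, $\mathcal{P}_f(^mE)$ is dense in $(\mathfrak{A}_m(E),\|\cdot\|_{\mathfrak{A}_m(E)})$. I would establish this via the Hahn--Banach criterion, that is, by showing that every $\varphi\in\mathfrak{A}_m(E)^{\prime}$ vanishing on $\mathcal{P}_f(^mE)$ must be zero. This is exactly where the duality hypothesis enters. Since $\mathcal{P}_f(^mE)$ is the linear span of the $m$th powers $\gamma^m$, $\gamma\in E^{\prime}$, a functional $\varphi$ annihilating all finite-type polynomials satisfies $\varphi(\gamma^m)=0$ for every $\gamma\in E^{\prime}$; by the very definition of the Borel transform this means $B_m(\varphi)(\gamma)=0$ for all $\gamma$, i.e.\ $B_m(\varphi)$ is the zero element of $\mathcal{P}(^mE^{\prime})$. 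Because $B_m\colon\mathfrak{A}_m(E)^{\prime}\to\mathfrak{B}_m(E^{\prime})$ is an isometric isomorphism, it is injective, so $\varphi=0$, and density follows.

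The main obstacle I anticipate lies in (a2), and specifically in recognizing that the full strength of the hypothesis $\mathfrak{A}_k(E)^{\prime}=\mathfrak{B}_k(E^{\prime})$ is used only through the injectivity of $B_m$, combined with the elementary but essential fact that the monomials $\gamma^m$ span $\mathcal{P}_f(^mE)$. Everything else --- the finite-type inclusion and the estimate (a1) --- is a direct consequence of the coherent-sequence axioms, so the proposition should follow once these observations are assembled.
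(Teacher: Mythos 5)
Your treatment of (a2) is correct and, pleasantly, self-contained: since $\mathcal{P}_f(^mE)=\mathrm{span}\{\gamma^m:\gamma\in E'\}$, any $\varphi\in\mathfrak{A}_m(E)'$ annihilating $\mathcal{P}_f(^mE)$ has $B_m(\varphi)=0$, injectivity of $B_m$ forces $\varphi=0$, and Hahn--Banach gives density; the paper simply quotes this as \cite[Lemma 2.1]{CDSjmaa}. The genuine gap is in (a1), together with an unstated prerequisite of Definition \ref{pi-tipo de holomorfia}. Your induction writes $\phi^m=\phi\cdot\phi^{m-1}$ and applies coherence condition \ref{coherent}(b) to the sequence $\{\mathfrak{A}_k(E)\}_k$ --- but the proposition's hypothesis makes $\{\mathfrak{B}_k(E')\}_k$ coherent, not $\{\mathfrak{A}_k(E)\}_k$; about the latter you are given only the standing assumptions of Definition \ref{coherent sequence} and the duality $\mathfrak{A}_k(E)'=\mathfrak{B}_k(E')$. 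Coherence does transfer from $\{\mathfrak{B}_k(E')\}_k$ down to $\{\mathfrak{A}_k(E)\}_k$, but that is a nontrivial theorem, namely \cite[Proposition 2.5]{CDSjmaa}, which the paper cites as its very first step and which you never invoke. The same omission hits the prerequisite: a $\pi_1$-holomorphy type is, by Definition \ref{pi-tipo de holomorfia}, first of all a holomorphy type, so conditions (1)--(3) of Definition \ref{Definition tipo holomorfia} must be checked for $\{\mathfrak{A}_k(E)\}_k$; in the paper this also follows from \cite[Proposition 2.5]{CDSjmaa} combined with Remark \ref{rema}, and your proposal never addresses it.

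Note that the estimate (a1) itself can be obtained without transferring coherence to $\mathfrak{A}$ at all, which is exactly what the paper does: for $T\in\mathfrak{A}_k(E)'$ with $\|T\|_{\mathfrak{A}_k(E)'}=1$ one has $|T(\phi^k)|=|B_k(T)(\phi)|\le\|B_k(T)\|\,\|\phi\|^k\le C^k\|B_k(T)\|_{\mathfrak{B}_k(E')}\,\|\phi\|^k=C^k\|\phi\|^k$, where the second inequality is Remark \ref{rema} applied to the coherent sequence $\{\mathfrak{B}_k(E')\}_k$ and the final equality is the isometry hypothesis; taking the supremum over such $T$ yields $\|\phi^k\|_{\mathfrak{A}_k(E)}\le C^k\|\phi\|^k$, i.e.\ (a1) with $K=C$. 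So your outline becomes a proof once you either add the citation of \cite[Proposition 2.5]{CDSjmaa} (which simultaneously legitimizes your induction and supplies the holomorphy-type prerequisite) or replace the induction by this dual-side estimate; as written, the step ``condition (b) gives $\phi^m\in\mathfrak{A}_m(E)$'' uses a hypothesis you do not have.
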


\begin{proof}
By \cite[Proposition 2.5]{CDSjmaa} we know that
$\{\mathfrak{A}_{k}(E)\}_{k}$ is a coherent sequence, hence it is a holomorphy type by Remark \ref{rema}. As to condition \ref{pi-tipo de holomorfia}(a2), \cite[Lemma 2.1]{CDSjmaa} shows that
$$\overline{\mathcal{P}_{f}(^{k}E)}^{\mathfrak{A}_{k}}=\mathfrak{A}_{k}(E) ~{\rm for~every~}k.$$
So all that is left to check is the inequality in condition (a1) of Definition \ref{pi-tipo de holomorfia}. By assumption we know that
$\|T\|_{\mathfrak{A}_{k}(E)^{\prime}}=\|B_k(T)\|_{\mathfrak{B}_{k}(E^{\prime})}$
for every $T\in\ \mathfrak{A}_{k}(E)^{\prime}$. Let $C$ be the constant of condition \ref{coherent sequence}(a) for the coherent sequence $\{\mathfrak{B}_{k}(E^{\prime})\}_{k}$. By the inequality in Remark \ref{rema},
$$\|B_{k}(T)\| \leq C^k  \|B_{k}(T)\|_{\mathfrak{B}_{k}(E^{\prime})}, $$
for all $T\in\ \mathfrak{A}_{k}(E)^{\prime}$ and $k \in \mathbb{N}_0$. Thus,
\begin{align*}
\|\phi^{k}\|_{\mathfrak{A}_{k}\left(
E\right)}& =\sup\limits_{\|T\|_{\mathfrak{A}_{k}(E)^{\prime}}= 1}
|T(\phi^{k})| = \sup\limits_{\|T\|_{\mathfrak{A}_{k}(E)^{\prime}}=
1} |B_{k}(T)(\phi)| \\&\leq \|\phi\|^{k} \cdot
\sup\limits_{\|T\|_{\mathfrak{A}_{k}(E)^{\prime}}= 1}
\|B_{k}(T)\| \\&
\leq \|\phi\|^{k} \cdot C^k \cdot \sup\limits_{\|T\|_{\mathfrak{A}_{k}(E)^{\prime}}= 1} \|B_{k}(T)\|_{\mathfrak{B}_{k}(E^{\prime})}\\&=
 \|\phi\|^{k} \cdot C^k \cdot \sup\limits_{\|T\|_{\mathfrak{A}_{k}(E)^{\prime}}= 1} \|T\|_{\mathfrak{A}_{k}(E)^{\prime}}=C^{k}\cdot \|\phi\|^{k},
\end{align*}
for all $\phi\in E^{\prime}$ and $k\in\mathbb{N}_0$.
\end{proof}

To continue we need the following result:

\begin{proposition}{\rm \cite[Lemma 3.3]{CDSjmaa}}
\label{lemma_carando}Let $\left\{  \mathfrak{B}_{k}\left(
E^{\prime}\right) \right\}  _{k}$ be a coherent multiplicative
sequence and $\{\mathfrak{A}_{k}(E)\}_{k}$ be such that
$\mathfrak{A}_{k}\left(  E\right) ^{\prime}=\mathfrak{B}_{k}\left(
E^{\prime}\right) $ for every $k.$ Let  $k\geq l$, $P\in\mathfrak{A}_{k}\left(
E\right) $ and $\varphi\in\mathfrak{A}_{k-l}\left(  E\right)
^{\prime}$ be given. Then the $l$-homogeneous polynomial
$x \in E \mapsto\varphi\left(
P_{x^{l}}\right) \in \mathbb{C}  $ belongs to $\mathfrak{A}_{l}\left(  E\right)  $ and%
\[
\Vert x\mapsto\varphi\left(  P_{x^{l}}\right)
\Vert_{\mathfrak{A}_{l}\left( E\right)  }\leq
M^{k}\Vert\varphi\Vert_{\mathfrak{A}_{k-l}\left(  E\right)
^{\prime}}\Vert P\Vert_{\mathfrak{A}_{k}\left(  E\right)  }.
\]

\end{proposition}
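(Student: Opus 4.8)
The plan is to reduce everything to the case $P=\phi^{k}$ with $\phi\in E^{\prime}$ and then recover the general statement by linearity and density. First I would note that, for each fixed $x$, the assignment $P\mapsto\varphi(P_{x^{l}})$ is linear in $P$ (the polarization $P\mapsto A$ is linear, $A\mapsto P_{x^{l}}$ is linear, and $\varphi$ is linear), so the polynomial $x\mapsto\varphi(P_{x^{l}})$ depends linearly on $P$; since $\mathcal{P}_{f}(^{k}E)=\mathrm{span}\{\phi^{k}:\phi\in E^{\prime}\}$, it suffices to treat $k$-th powers and afterwards pass to the $\mathfrak{A}_{k}(E)$-closure. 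For $P=\phi^{k}$ the symmetric form is $A(x_{1},\ldots,x_{k})=\phi(x_{1})\cdots\phi(x_{k})$, whence $P_{x^{l}}=\phi(x)^{l}\,\phi^{k-l}$ and $x\mapsto\varphi(P_{x^{l}})=\varphi(\phi^{k-l})\,\phi(x)^{l}=B_{k-l}(\varphi)(\phi)\,\phi^{l}(x)$. This is a scalar multiple of the finite type polynomial $\phi^{l}$, so membership in $\mathfrak{A}_{l}(E)$ is immediate on finite type polynomials, and the only real work is the norm estimate.

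For the estimate I would argue by duality, exploiting that each $B_{j}$ is an isometric isomorphism $\mathfrak{A}_{j}(E)^{\prime}\to\mathfrak{B}_{j}(E^{\prime})$. Fix $\theta\in\mathfrak{A}_{l}(E)^{\prime}$ and set $b=B_{k-l}(\varphi)\in\mathfrak{B}_{k-l}(E^{\prime})$ and $c=B_{l}(\theta)\in\mathfrak{B}_{l}(E^{\prime})$. Since $\{\mathfrak{B}_{k}(E^{\prime})\}_{k}$ is multiplicative, the pointwise product $bc$ is a $k$-homogeneous polynomial on $E^{\prime}$ lying in $\mathfrak{B}_{k}(E^{\prime})$ with $\|bc\|_{\mathfrak{B}_{k}(E^{\prime})}\leq M^{k}\|b\|_{\mathfrak{B}_{k-l}(E^{\prime})}\|c\|_{\mathfrak{B}_{l}(E^{\prime})}$. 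Pulling back, $\xi:=B_{k}^{-1}(bc)\in\mathfrak{A}_{k}(E)^{\prime}$ satisfies $\|\xi\|_{\mathfrak{A}_{k}(E)^{\prime}}=\|bc\|_{\mathfrak{B}_{k}(E^{\prime})}$. The crucial identity to verify is $\theta\big(x\mapsto\varphi(P_{x^{l}})\big)=\xi(P)$; for $P=\phi^{k}$ both sides equal $\varphi(\phi^{k-l})\,\theta(\phi^{l})=b(\phi)c(\phi)=(bc)(\phi)=B_{k}(\xi)(\phi)$, and linearity in $P$ propagates it to all of $\mathcal{P}_{f}(^{k}E)$. Given the identity, for finite type $P$ I bound $|\theta(x\mapsto\varphi(P_{x^{l}}))|=|\xi(P)|\leq\|\xi\|_{\mathfrak{A}_{k}(E)^{\prime}}\|P\|_{\mathfrak{A}_{k}(E)}\leq M^{k}\|\varphi\|_{\mathfrak{A}_{k-l}(E)^{\prime}}\|\theta\|_{\mathfrak{A}_{l}(E)^{\prime}}\|P\|_{\mathfrak{A}_{k}(E)}$, using the isometries of $B_{k-l}$ and $B_{l}$. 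As $x\mapsto\varphi(P_{x^{l}})$ genuinely lies in $\mathfrak{A}_{l}(E)$ for finite type $P$, taking the supremum over $\|\theta\|_{\mathfrak{A}_{l}(E)^{\prime}}\leq1$ turns the left-hand side into $\|x\mapsto\varphi(P_{x^{l}})\|_{\mathfrak{A}_{l}(E)}$ (isometric embedding into the bidual), which gives the asserted inequality for finite type $P$.

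Finally I would remove the finite type restriction by density and continuity. By \cite[Lemma 2.1]{CDSjmaa}, $\mathcal{P}_{f}(^{k}E)$ is dense in $\mathfrak{A}_{k}(E)$, so I choose finite type $P_{n}\to P$ there. Applying the finite type estimate to $P_{n}-P_{m}$ shows that $\big(x\mapsto\varphi((P_{n})_{x^{l}})\big)_{n}$ is Cauchy in the Banach space $\mathfrak{A}_{l}(E)$, hence converges to some $g\in\mathfrak{A}_{l}(E)$. On the other hand, iterating coherence condition \ref{coherent}(a) yields $\|P_{x^{l}}\|_{\mathfrak{A}_{k-l}(E)}\leq C^{l}\|x\|^{l}\|P\|_{\mathfrak{A}_{k}(E)}$, so $P\mapsto P_{x^{l}}$ is continuous from $\mathfrak{A}_{k}(E)$ into $\mathfrak{A}_{k-l}(E)$ for each fixed $x$; since $\varphi$ is continuous on $\mathfrak{A}_{k-l}(E)$, we get $\varphi((P_{n})_{x^{l}})\to\varphi(P_{x^{l}})$ pointwise, forcing $g=\big(x\mapsto\varphi(P_{x^{l}})\big)$. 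Thus $x\mapsto\varphi(P_{x^{l}})\in\mathfrak{A}_{l}(E)$, and letting $n\to\infty$ in the finite type estimate delivers the stated bound. The step I expect to be the main obstacle is exactly this last identification: one must ensure that the $\mathfrak{A}_{l}(E)$-limit of the approximants coincides with the polynomial $x\mapsto\varphi(P_{x^{l}})$, and this is where the continuity of $P\mapsto P_{x^{l}}$ supplied by coherence, combined with the continuous inclusion $\mathfrak{A}_{l}(E)\hookrightarrow\mathcal{P}(^{l}E)$, is essential.
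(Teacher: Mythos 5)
The paper itself offers no proof of this proposition: it is imported verbatim from \cite[Lemma 3.3]{CDSjmaa} and used as a black box in the proof of Proposition \ref{coerente_pi2}, so there is no internal argument to compare yours against. Judged on its own merits, your proof is correct, and it is the natural duality argument suggested by the hypotheses. The reduction to powers is right: for $P=\phi^{k}$ one has $P_{x^{l}}=\phi(x)^{l}\phi^{k-l}$, so $x\mapsto\varphi(P_{x^{l}})=B_{k-l}(\varphi)(\phi)\,\phi^{l}$. The key construction --- pulling back the product $B_{k-l}(\varphi)\,B_{l}(\theta)\in\mathfrak{B}_{k}(E^{\prime})$ (multiplicativity, with the correct constant $M^{(k-l)+l}=M^{k}$) to $\xi\in\mathfrak{A}_{k}(E)^{\prime}$ via the isometry $B_{k}$, verifying $\theta\bigl(x\mapsto\varphi(P_{x^{l}})\bigr)=\xi(P)$ on powers, extending by linearity to $\mathcal{P}_{f}(^{k}E)$, and recovering the norm from the Hahn--Banach duality formula --- is sound, as is the final passage to general $P$ by density (\cite[Lemma 2.1]{CDSjmaa}). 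Your identification of the limit is also handled correctly: convergence in $\mathfrak{A}_{l}(E)$ gives pointwise convergence through the continuous inclusion $\mathfrak{A}_{l}(E)\hookrightarrow\mathcal{P}(^{l}E)$, while $\varphi\bigl((P_{n})_{x^{l}}\bigr)\to\varphi\bigl(P_{x^{l}}\bigr)$ follows from continuity of $P\mapsto P_{x^{l}}$ and of $\varphi$.

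One point you should make explicit. When you write ``iterating coherence condition \ref{coherent}(a)'' to obtain $\Vert P_{x^{l}}\Vert_{\mathfrak{A}_{k-l}(E)}\leq C^{l}\Vert x\Vert^{l}\Vert P\Vert_{\mathfrak{A}_{k}(E)}$, you are using that $\{\mathfrak{A}_{k}(E)\}_{k}$ is itself a coherent sequence. That is not among the stated hypotheses --- only $\{\mathfrak{B}_{k}(E^{\prime})\}_{k}$ is assumed coherent --- but it is exactly \cite[Proposition 2.5]{CDSjmaa}, which this paper invokes for the same purpose in Propositions \ref{coerente_pi1} and \ref{coerente_pi2}. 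The same fact is needed even for the statement to be meaningful, since $\varphi(P_{x^{l}})$ presupposes $P_{x^{l}}\in\mathfrak{A}_{k-l}(E)$. With that citation added, your argument is complete.
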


\begin{proposition}
\label{coerente_pi2}Let $\left\{  \mathfrak{B}_{k}\left(  E^{\prime}\right)  \right\}
_{k}$ be a coherent multiplicative sequence and
$\{\mathfrak{A}_{k}(E)\}_{k}$ be such that
$\mathfrak{A}_{k}(E)^{\prime }=\mathfrak{B}_{k}(E^{\prime})$ for
every $k$. Then $\mathfrak{A}\left( E\right) =\left\{
\mathfrak{A}_{k}\left( E\right)  \right\} _{k}$ is a
$\pi_{2}$-holomorphy type from $E$ to $\mathbb{C}$.
\end{proposition}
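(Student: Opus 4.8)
The plan is to verify conditions (b1) and (b2) of Definition \ref{pi-tipo de holomorfia}(b) for the holomorphy type $\Theta = \mathfrak{A}(E)$, i.e. with $\mathcal{P}_{\Theta}(^mE) = \mathfrak{A}_m(E)$ and $\|\cdot\|_{\Theta} = \|\cdot\|_{\mathfrak{A}_m(E)}$, by reducing everything to Proposition \ref{lemma_carando}. The first thing I would do is unwind the notation in the definition: given $P\in\mathfrak{A}_m(E)$ with associated symmetric $m$-linear form $A$, for fixed $y$ the map $x\mapsto A(x^k y^{m-k})$ is exactly the $k$-homogeneous polynomial $P_{y^{m-k}}$, so that the candidate polynomial $y\mapsto T(\widehat{A(\cdot)^k}) = T(A(\cdot)^k y^{m-k})$ is precisely $y\mapsto T(P_{y^{m-k}})$. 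This identification is what aligns the statement with Proposition \ref{lemma_carando}.

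Next I would produce the functional to feed into Proposition \ref{lemma_carando}. Since every $Q\in\mathfrak{A}_k(E) = \mathcal{P}_{\Theta}(^kE)$ is homogeneous, its Taylor expansion at $0$ collapses to a single term and $\|Q\|_{\Theta,\rho} = \rho^k\|Q\|_{\mathfrak{A}_k(E)}$; hence the inclusion $\mathfrak{A}_k(E)\hookrightarrow\mathcal{H}_{\Theta b}(E)$ is continuous and the restriction $\varphi := T|_{\mathfrak{A}_k(E)}$ is a well-defined element of $\mathfrak{A}_k(E)^{\prime}$. Applying Proposition \ref{lemma_carando} with its $k$ and $l$ taken to be $m$ and $m-k$ (so that the dual space appearing there is $\mathfrak{A}_{m-(m-k)}(E)^{\prime} = \mathfrak{A}_k(E)^{\prime}$, which is where $\varphi$ lives) immediately gives that $y\mapsto T(P_{y^{m-k}})$ belongs to $\mathfrak{A}_{m-k}(E)$ --- this is (b1) --- together with the estimate $\|y\mapsto T(P_{y^{m-k}})\|_{\mathfrak{A}_{m-k}(E)} \le M^m\|\varphi\|_{\mathfrak{A}_k(E)^{\prime}}\|P\|_{\mathfrak{A}_m(E)}$, where $M$ is the multiplicativity constant of $\{\mathfrak{B}_k(E^{\prime})\}_k$.

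Finally, to obtain (b2) I would convert the factor $\|\varphi\|_{\mathfrak{A}_k(E)^{\prime}}$ into the prescribed constants. Using the continuity estimate $|T(f)|\le C\|f\|_{\Theta,\rho}$ on homogeneous $Q$ together with the identity $\|Q\|_{\Theta,\rho} = \rho^k\|Q\|_{\mathfrak{A}_k(E)}$ from the previous step yields $\|\varphi\|_{\mathfrak{A}_k(E)^{\prime}}\le C\rho^k$; substituting this into the estimate above gives $\|T(\widehat{A(\cdot)^k})\|_{\Theta}\le C\,M^m\rho^k\|P\|_{\Theta}$, so the choice $K=M$ (independent of $m$, $k$, $P$) establishes (b2). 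The argument is short once the reductions are in place; the only genuinely delicate point --- and where I expect an error would most likely slip in --- is the index bookkeeping when invoking Proposition \ref{lemma_carando}, namely correctly matching the homogeneity degrees and checking that the functional obtained by restricting $T$ lands in the exact dual space required there.
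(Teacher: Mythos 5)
Your proposal is correct and follows essentially the same route as the paper's own proof: both identify $T(\widehat{A(\cdot)^k})$ with the map $x\mapsto T(P_{x^{m-k}})$, bound the restriction $T|_{\mathfrak{A}_k(E)}$ by $C\rho^k$ using that $p_\rho(Q)=\rho^k\|Q\|_{\mathfrak{A}_k(E)}$ for $k$-homogeneous $Q$, and then invoke Proposition \ref{lemma_carando} to get membership in $\mathfrak{A}_{m-k}(E)$ and the estimate with $K=M$. Your index bookkeeping when applying Proposition \ref{lemma_carando} (its $k,l$ taken as $m,m-k$, so the functional lives in $\mathfrak{A}_k(E)'$) matches the paper exactly.
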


\begin{proof} Again by \cite[Proposition 2.5]{CDSjmaa} we get that $\{\mathfrak{A}_{k}(E)\}_{k}$ is a coherent sequence, so the space $ \mathcal{H}_{b\mathfrak{A}}(E)$ is well defined.
Let $T\in\left[  \mathcal{H}_{b\mathfrak{A}}(E)\right]
^{\prime}$ and
$m,k\in\mathbb{N}_{0}$ with $k\leq m$ be given. Note that%
\[
T\left(  \widehat{A(\cdot)^{k}}\right)  =\left(  x\mapsto T\left(  P_{x^{m-k}%
}\right)  \right)
\]
for every $P\in\mathfrak{A}_{m}\left(  E\right),$ where $A$ is the 
$m$-linear symmetric mappings on $E^{m}$ such that
$P=\hat{A}$. Therefore from Proposition \ref{lemma_carando}
it follows that $T\left(  \widehat
{A(\cdot)^{k}}\right)  $ belongs to $\mathfrak{A}_{m-k}\left(  E\right)  $ and%
\[
\Vert T(\widehat{A(\cdot)^{k}})\Vert_{\mathfrak{A}_{m-k}\left(
E\right) }=\Vert x\mapsto T\left(  P_{x^{m-k}}\right)
\Vert_{\mathfrak{A}_{m-k}\left(
E\right)  }\leq M^{m} \cdot \Vert T|_{\mathfrak{A}_{k}\left(  E\right)  }%
\Vert_{\mathfrak{A}_{k}\left(  E\right)  ^{\prime}}\cdot\Vert P\Vert_{\mathfrak{A}%
_{m}\left(  E\right)  },
\]
where $T|_{\mathfrak{A}_{k}\left(  E\right)}$ obviously means the restriction of $T$ to $\mathfrak{A}_{k}\left(  E\right)$. Since $T\in\left[  \mathcal{H}_{b\mathfrak{A}}(E)\right]
^{\prime},$ there
are $C>0$ and $\rho>0$ such that%
\[
\left\vert T\left(  f\right)  \right\vert \leq C \cdot p_{\rho}\left(  f\right)
\]
for every $f\in\mathcal{H}_{b\mathfrak{A}}(E).$ In particular,
\[
\left\vert T\left(  Q\right)  \right\vert \leq C \cdot p_{\rho}\left(
Q\right) =C \cdot \rho^{k} \cdot \Vert Q\Vert_{\mathfrak{A}_{k}\left(  E\right)
}
\]
for every $Q\in\mathfrak{A}_{k}\left(  E\right)  $, so
\[
\Vert T|_{\mathfrak{A}_{k}\left(  E\right)
}\Vert_{\mathfrak{A}_{k}\left( E\right)  ^{\prime}}\leq C\cdot \rho^{k}.
\]
Therefore,%
\[
\Vert T(\widehat{A(\cdot)^{k}})\Vert_{\mathfrak{A}_{m-k}\left(
E\right)
}\leq M^{m}\cdot \Vert T|_{\mathfrak{A}_{k}\left(  E\right)  }\Vert_{\mathfrak{A}%
_{k}\left(  E\right)  ^{\prime}}\cdot \Vert
P\Vert_{\mathfrak{A}_{m}\left( E\right)  }\leq C\cdot M^{m} \cdot \rho^{k}\cdot \Vert
P\Vert_{\mathfrak{A}_{m}\left(  E\right) },
\]
which completes the proof.
\end{proof}

A combination of Proposition \ref{coerente_pi1} with Proposition \ref{coerente_pi2} makes clear that Corollary \ref{cdm2} is a particular case of Theorem \ref{main2}:

\begin{corollary}Let $\left\{  \mathfrak{B}_{k}\left(  E^{\prime}\right)
\right\} _{k}$ be a coherent multiplicative sequence and
$\{\mathfrak{A}_{k}(E)\}_{k}$ be such that
$\mathfrak{A}_{k}(E)^{\prime }=\mathfrak{B}_{k}(E^{\prime})$ for
every $k$. Then $\mathfrak{A}\left( E\right) =\left\{
\mathfrak{A}_{k}\left( E\right)  \right\} _{k}$ is a
$\pi_1$-$\pi_{2}$-holomorphy type.
\end{corollary}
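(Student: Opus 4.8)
The plan is to recognize that being a $\pi_1$-$\pi_2$-holomorphy type is by definition nothing more than being simultaneously a $\pi_1$-holomorphy type and a $\pi_2$-holomorphy type, so that the statement follows immediately by invoking the two propositions already established. First I would observe that, by Definition \ref{coherent}, a coherent multiplicative sequence is in particular a coherent sequence: multiplicativity is an extra condition imposed on top of coherence, not a substitute for it. Hence the hypothesis that $\left\{\mathfrak{B}_{k}(E^{\prime})\right\}_{k}$ is a coherent multiplicative sequence already supplies the weaker coherence hypothesis required by Proposition \ref{coerente_pi1}.

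With this observation in hand, I would apply Proposition \ref{coerente_pi1}, using only that $\left\{\mathfrak{B}_{k}(E^{\prime})\right\}_{k}$ is coherent and that $\mathfrak{A}_{k}(E)^{\prime}=\mathfrak{B}_{k}(E^{\prime})$ for every $k$, to conclude that $\left\{\mathfrak{A}_{k}(E)\right\}_{k}$ is a $\pi_1$-holomorphy type. Next I would apply Proposition \ref{coerente_pi2}, whose hypotheses coincide exactly with those of the present statement (a coherent multiplicative sequence $\left\{\mathfrak{B}_{k}(E^{\prime})\right\}_{k}$ together with the duality $\mathfrak{A}_{k}(E)^{\prime}=\mathfrak{B}_{k}(E^{\prime})$), to conclude that $\left\{\mathfrak{A}_{k}(E)\right\}_{k}$ is also a $\pi_2$-holomorphy type.

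Combining the two conclusions, $\left\{\mathfrak{A}_{k}(E)\right\}_{k}$ is simultaneously a $\pi_1$- and a $\pi_2$-holomorphy type, which is precisely the assertion that it is a $\pi_1$-$\pi_2$-holomorphy type. There is no genuine obstacle in this argument; the only point worth flagging is the trivial but necessary remark that multiplicativity strengthens rather than replaces coherence, so that the coherence-only Proposition \ref{coerente_pi1} indeed remains applicable under the stronger multiplicative hypothesis of the corollary.
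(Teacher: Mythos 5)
Your proposal is correct and is exactly the paper's argument: the corollary is stated there as an immediate combination of Proposition \ref{coerente_pi1} (coherence plus the duality $\mathfrak{A}_{k}(E)^{\prime}=\mathfrak{B}_{k}(E^{\prime})$ gives a $\pi_1$-holomorphy type) and Proposition \ref{coerente_pi2} (the multiplicative hypothesis additionally gives a $\pi_2$-holomorphy type). Your remark that multiplicativity strengthens rather than replaces coherence, so Proposition \ref{coerente_pi1} still applies, is the only point needing mention, and you handled it correctly.
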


Now we prove that our results are more than formal generalizations of the known results, in the sense that there are concrete cases covered by our results and not covered by the results of \cite{CDSjmaa}. Of course it is enough to give examples of $\{\mathfrak{A}_{k}(E)\}_{k}$ such that:\\
(i) $\{\mathfrak{A}_{k}(E)\}_{k}$ is a $\pi_1$-$\pi_{2}$-holomorphy type,\\
(ii) $\mathfrak{A}_{k}(E)^{\prime }=\mathfrak{B}_{k}(E^{\prime})$ for every $k$,\\ (iii) $\left\{  \mathfrak{B}_{k}\left(  E^{\prime}\right)\right\} _{k}$ fails to be a coherent sequence.

\begin{example}\label{exemplos}\rm (a) Consider the space $\mathcal{P}_{\left( p,m\left(
s;q\right) \right)}\left(^{m}E\right)$ of all absolutely $\left( p,m\left(
s;q\right) \right)$-summing $m$-homogeneous
polynomials on $E$ introduced by Matos \cite[Section 3]{matosjmaa}, where $0 < q \leq s \leq +\infty$ and $p \geq q$. In general $\left(\mathcal{P}_{\left( p,m\left(
s;q\right) \right)}\left(^{m}E\right)\right)_{m \in \mathbb{N}_0}$ is not a holomorphy type, hence fails to be a coherent sequence. For example, making $s=q = p > 1$, the space $\mathcal{P}_{\left( p,m\left(
p;p\right) \right)}\left(^{m}E\right)$ coincides with the space of absolutely $p$-summing $m$-homogeneous polynomials (see \cite[p. 843]{matosjmaa}), which is not a holomorphy type by \cite[Example 3.2]{monat}.

On the other hand, Matos proved in \cite[Section 8.2]{matos7} that if
$E^{\prime}$ has the bounded approximation property,
then the Borel transform $\mathcal{B}_{\widetilde{N},\left(
s;\left( r,q\right) \right) }$ establishes an isometric
isomorphism between $\left[\mathcal{P}_{\widetilde{N},\left(
s;\left( r,q\right) \right)}\left(^{m}E\right)\right]^{\prime}$
and $\mathcal{P}_{\left( s^{\prime},m\left(
r^{\prime};q^{\prime}\right) \right)}\left(^{m}E^{\prime}\right)$,
where $\mathcal{P}_{\widetilde{N},\left( s;\left( r,q\right)
\right)}\left(^{m}E\right)$ denotes the space of all $\left(
s;\left( r,q\right) \right)$-quasi-nuclear $m$-homogeneous
polynomials on $E$ (as usual $s', r', q'$ denote the conjugates of $s, r, q$, respectively). 
 So
\begin{equation}\label{duality}\left[\mathcal{P}_{\widetilde{N},\left(
s;\left( r,q\right) \right)}\left(^{m}E\right)\right]^{\prime}=
\mathcal{P}_{\left( s^{\prime},m\left(
r^{\prime};q^{\prime}\right) \right)}\left(^{m}E^{\prime}\right) {\rm ~for~every~} m.
\end{equation}
The proof that $\mathcal{P}_{\widetilde{N},\left(
s;\left( r,q\right) \right)}\left(^{m}E\right)$ is a $\pi_1$-holomorphy type can be found in \cite[Sections 8.2 and 8.3]{matos7} and that it is a $\pi_2$-holomorphy type in \cite[Proposition 9.1.5]{matos7}.

\medskip

\noindent (b) X. Mujica proved in \cite[Teorema 2.5.1]{ximena} that if $E^{\prime}$ has the
bounded approximation property, $p \geq 1$ and $F$ is reflexive,
then the Borel transform $\mathcal{B}_{\sigma(p)}$ establishes an
isometric isomorphism between
$\left[\mathcal{P}_{\sigma(p)}\left(^{m}E;F\right)\right]^{\prime}$
and $\mathcal{P}_{\tau(p)}\left(^{m}E^{\prime};F^{\prime}\right)$,
where $\mathcal{P}_{\sigma(p)}\left(^{m}E;F\right)$ denotes the
space of all $\sigma(p)$-nuclear $m$-homogeneous polynomials from
$E$ into $F$, and
$\mathcal{P}_{\tau(p)}\left(^{m}E^{\prime};F^{\prime}\right)$
denotes the space of all $\tau(p)$-summing $m$-homogeneous
polynomials from $E^{\prime}$ into $F^{\prime}$. Making $F =\mathbb{C}$ we get
$$\left[\mathcal{P}_{\sigma(p)}\left(^{m}E\right)\right]^{\prime} = \mathcal{P}_{\tau(p)}\left(^{m}E^{\prime}\right) {\rm for ~every~} m.$$
Again, and for the same reason, $\left(\mathcal{P}_{\tau(p)}\left(^{m}E^{\prime}\right)\right)_{m=0}^\infty$ is not a holomorphy type in general, consequently it fails to be a coherent sequence. Condition (a1) of Definition \ref{pi-tipo de holomorfia} follows easily because $\mathcal{P}_{\sigma(p)}$ is a polynomial ideal. Condition (a2) is proved in \cite[Proposi\c c\~ao 2.4.4]{ximena}, so $\mathcal{P}_{\sigma(p)}\left(^{m}E\right)$ is a $\pi_1$-holomorphy type. The fact that $\mathcal{P}_{\sigma(p)}\left(^{m}E\right)$ is a $\pi_2$-holomorphy type is proved in \cite[Lema 3.2.6]{ximena} with $K =1$.
\end{example}

\section{Proofs of the main results}

The first step is the definition of the Borel transform. A holomorphy type from $E$ to $F$ shall be denoted by either $\Theta$ or $( \mathcal{P}_{\Theta}(^{m}E;F))_{m=0}^\infty$.

\begin{definition}\rm (a) Let $\Theta$ be a $\pi_{1}$-holomorphy type from $E$ to
$F$. It is clear that the {\it Borel transform}
\[
\mathcal{B}_{\Theta}\colon\left[  \mathcal{P}_{\Theta}(^{m}E;F)\right]
^{\prime}\longrightarrow\mathcal{P}(^{m}E^{\prime};F^{\prime})~, ~ \mathcal{B}_{\Theta} T(\phi)(y)=T(\phi^{m}y),\] for $T\in\left[
\mathcal{P}_{\Theta}(^{m}E;F)\right]  ^{\prime}$, $\phi\in E^{\prime}$ and
$y\in F$, is well defined and linear. Moreover, ${\cal B}_\Theta$ is continuous and injective by conditions (a1) and (a2) of Definition \ref{pi-tipo de holomorfia}. So, denoting the range of $\mathcal{B}_{\Theta}$ in $\mathcal{P}%
(^{m}E^{\prime};F^{\prime})$ by $\mathcal{P}_{\Theta^{\prime}}(^{m}E^{\prime
};F^{\prime})$, the correspondence
$$\mathcal{B}_{\Theta}
T \in \mathcal{P}%
_{\Theta^{\prime}}(^{m}E^{\prime};F^{\prime})\mapsto \|\mathcal{B}_{\Theta}
T\|_{\Theta^{\prime}}:=\|T\|,  $$
defines a norm on $\mathcal{P}%
_{\Theta^{\prime}}(^{m}E^{\prime};F^{\prime})$. In this fashion the spaces $\left(  \left[  \mathcal{P}_{\Theta}%
(^{m}E;F)\right]  ^{\prime}\;,\|\cdot\|\right)  $ and $(\mathcal{P}_{\Theta^{\prime}}(^{m}E^{\prime};F^{\prime}),\;\|\cdot
\|_{\Theta^{\prime}})$ are isometrically isomorphic.

\medskip

\noindent (b) Let $(\mathcal{P}_{\Theta}(^{m}%
E))_{m=0}^{\infty}$ be a $\pi_{1}$-holomorphy type from $E$ to $\mathbb{C}$.
A holomorphic function $f\in\mathcal{H}(E^{\prime})$ is said to be {\it of $\Theta^{\prime}$-exponential type} if\\
(b1) $\hat{d}^{m}f(0)\in\mathcal{P}_{\Theta^{\prime}%
}(^{m}E^{\prime})$ for every $m\in\mathbb{N}_{0}$;\\
(b2) There are constants $C\geq0$ and
$c>0$ such that%
\[
\Vert{\hat{d}}^{m}f(0)\Vert_{\Theta^{\prime}}\leq Cc^{m},
\]
for all $m\in\mathbb{N}_{0}$.

The vector space of all such functions
is denoted by ${\rm Exp}_{\Theta^{\prime}}(E^{\prime})$.
\end{definition}

 The change we made in the definition of $\pi_1$-holomorphy types does not affect the validity of \cite[Corollary 2.1]{favaro-jatoba1}. So if $\Theta$ is a $\pi_1$-holomorphy type from $E$ to $\mathbb{C}$, then all nuclear entire functions of bounded type belong to $\mathcal{H}_{\Theta b}(E)$. In particular, the functions of the form $e^\phi$, for $\phi \in E'$, belong to $\mathcal{H}_{\Theta b}(E)$. The proof of \cite[Theorem 2.1]{favaro-jatoba1} is not affected either:

\begin{proposition}{\rm \cite[Theorem 2.1]{favaro-jatoba1}}
\label{Propo isomorfismo Hthetab} If $\Theta$ is a $\pi_{1}$-holomorphy type
from $E$ to $\mathbb{C}$, then the Borel transform%
\[
\mathcal{B}\colon\lbrack\mathcal{H}_{\Theta b}(E)]^{\prime}\longrightarrow
{\rm Exp}_{\Theta^{\prime}}(E^{\prime})~,~\mathcal{B}T(\phi)=T(e^{\phi}),\]
 for all $T\in\lbrack\mathcal{H}%
_{\Theta b}(E)]^{\prime}$ and $\phi\in E^{\prime},$ is an algebraic
isomorphism.
\end{proposition}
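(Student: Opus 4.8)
The plan is to show that $\mathcal{B}$ is well defined, linear, injective and surjective, reducing every step to the fiberwise polynomial Borel transform $\mathcal{B}_{\Theta}\colon[\mathcal{P}_{\Theta}(^{m}E)]'\to\mathcal{P}_{\Theta'}(^{m}E')$, which is already known to be an isometric isomorphism onto its range. The guiding observation is that the defining seminorms $\|\cdot\|_{\Theta,\rho}$ and the exponential-type growth condition (b2) are tailored to match each other exactly through the identity $\|T_m\|=\|\mathcal{B}_{\Theta}T_m\|_{\Theta'}$, where $T_m$ denotes the restriction of a functional $T$ to $\mathcal{P}_{\Theta}(^{m}E)\subseteq\mathcal{H}_{\Theta b}(E)$.

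For well-definedness I would start from the fact, recorded just before the statement, that $e^{\phi}\in\mathcal{H}_{\Theta b}(E)$, and that its Taylor series $\sum_{m}\phi^{m}/m!$ converges to it in $\tau_{\Theta}$: the tail estimate $\sum_{m>N}\frac{\rho^{m}}{m!}\|\phi^{m}\|_{\Theta}\leq\sum_{m>N}\frac{(K\rho\|\phi\|)^{m}}{m!}\to 0$ uses condition (a1) of Definition \ref{pi-tipo de holomorfia}. Continuity of $T$ then lets me interchange $T$ with the series and obtain
\[
\mathcal{B}T(\phi)=T(e^{\phi})=\sum_{m}\tfrac{1}{m!}T_m(\phi^{m})=\sum_{m}\tfrac{1}{m!}\mathcal{B}_{\Theta}T_m(\phi),
\]
so the $m$-th Taylor coefficient of $\mathcal{B}T$ at $0$ is $\mathcal{B}_{\Theta}T_m\in\mathcal{P}_{\Theta'}(^{m}E')$, giving (b1). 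For (b2), continuity of $T$ furnishes $C,\rho>0$ with $|T(f)|\leq C\|f\|_{\Theta,\rho}$; evaluating on a homogeneous $P\in\mathcal{P}_{\Theta}(^{m}E)$ gives $|T(P)|\leq C\rho^{m}\|P\|_{\Theta}$, hence $\|T_m\|\leq C\rho^{m}$ and $\|\hat{d}^{m}(\mathcal{B}T)(0)\|_{\Theta'}=\|\mathcal{B}_{\Theta}T_m\|_{\Theta'}=\|T_m\|\leq C\rho^{m}$, which is precisely (b2) with $c=\rho$. Combined with the bound $\|Q\|\leq K^{m}\|Q\|_{\Theta'}$ (again from (a1)), this also shows $\mathcal{B}T$ is entire, so $\mathcal{B}T\in{\rm Exp}_{\Theta'}(E')$.

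Linearity is immediate. For injectivity, if $\mathcal{B}T=0$ then all Taylor coefficients vanish, so $\mathcal{B}_{\Theta}T_m=0$; injectivity of $\mathcal{B}_{\Theta}$ forces $T_m=0$, and since the Taylor series of any $f\in\mathcal{H}_{\Theta b}(E)$ converges to $f$ in $\tau_{\Theta}$, continuity of $T$ yields $T(f)=\sum_{m}\frac{1}{m!}T_m(\hat{d}^{m}f(0))=0$, whence $T=0$. For surjectivity I would reassemble a functional from its pieces: given $g\in{\rm Exp}_{\Theta'}(E')$ with coefficients $Q_m:=\hat{d}^{m}g(0)$ satisfying $\|Q_m\|_{\Theta'}\leq Cc^{m}$, set $T_m:=\mathcal{B}_{\Theta}^{-1}(Q_m)$, so $\|T_m\|=\|Q_m\|_{\Theta'}\leq Cc^{m}$, and define $T(f):=\sum_{m}\frac{1}{m!}T_m(\hat{d}^{m}f(0))$. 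The estimate
\[
|T(f)|\leq\sum_{m}\tfrac{1}{m!}\|T_m\|\,\|\hat{d}^{m}f(0)\|_{\Theta}\leq C\sum_{m}\tfrac{c^{m}}{m!}\|\hat{d}^{m}f(0)\|_{\Theta}=C\|f\|_{\Theta,c}
\]
shows at once that the series converges and that $T\in[\mathcal{H}_{\Theta b}(E)]'$, and a direct computation gives $\mathcal{B}T(\phi)=\sum_{m}\frac{1}{m!}Q_m(\phi)=g(\phi)$.

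The step I expect to demand the most care is not any single inequality but the repeated interchange of $T$ with the infinite Taylor series, both in computing $\mathcal{B}T(\phi)$ and in recovering $T(f)$; this must be justified by $\tau_{\Theta}$-convergence of the Taylor expansion together with the continuity of $T$. This is exactly where the seminorms $\|\cdot\|_{\Theta,\rho}$ and condition (a1) enter, and it is also why the refinement of the definition is harmless: the argument uses only conditions (a1) and (a2), both of which persist in Definition \ref{pi-tipo de holomorfia}, so the proof of \cite[Theorem 2.1]{favaro-jatoba1} carries over verbatim.
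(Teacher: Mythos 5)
Your proof is correct: the interchange of $T$ with Taylor series is properly justified by the $\tau_{\Theta}$-convergence of Taylor expansions in $\mathcal{H}_{\Theta b}(E)$, condition (a1) gives both $\phi^{m}\in\mathcal{P}_{\Theta}(^{m}E)$ and the entirety of $\mathcal{B}T$, condition (a2) gives injectivity of $\mathcal{B}_{\Theta}$ and hence of $\mathcal{B}$, and the surjectivity argument reassembling $T$ from the coefficient functionals $T_{m}=\mathcal{B}_{\Theta}^{-1}(\hat{d}^{m}g(0))$ with the bound $|T(f)|\leq C\Vert f\Vert_{\Theta,c}$ is sound. Note that the paper itself does not reprove this statement but defers to \cite[Theorem 2.1]{favaro-jatoba1}, remarking only that the refined definition of $\pi_{1}$-holomorphy type leaves that proof intact; your coefficient-wise reduction to the polynomial Borel transform is precisely the standard argument used there, so you have in effect supplied the omitted details rather than found a different route.
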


\begin{proposition}\label{densidade de ephi b}
Let $\Theta$ be a $\pi_{1}$-holomorphy type from $E$ to $\mathbb{C}$ and $U$ be a
non-empty open subset of $E^{\prime}$. Then the set
$$S={\rm span}\{e^{\phi}:\phi\in U \}$$

\noindent is dense in $\mathcal{H}_{\Theta b}(E)$.
\end{proposition}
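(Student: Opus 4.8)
The plan is to use the standard Hahn--Banach characterization of density: in a locally convex space, a linear subspace $S$ is dense if and only if the only continuous linear functional vanishing on $S$ is the zero functional. Since $(\mathcal{H}_{\Theta b}(E),\tau_{\Theta})$ is a Fr\'echet space, this criterion applies, so it suffices to take an arbitrary $T\in[\mathcal{H}_{\Theta b}(E)]^{\prime}$ with $T|_{S}=0$ and prove that $T=0$.

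First I would unwind what $T|_{S}=0$ means. Because $S$ is the linear span of the exponentials $e^{\phi}$ with $\phi\in U$, the hypothesis is equivalent to $T(e^{\phi})=0$ for every $\phi\in U$. The key observation is that this says exactly that the Borel transform $\mathcal{B}T$ of Proposition~\ref{Propo isomorfismo Hthetab} vanishes on $U$, since $\mathcal{B}T(\phi)=T(e^{\phi})$ by definition.

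Next I would bring in holomorphy. By Proposition~\ref{Propo isomorfismo Hthetab} we have $\mathcal{B}T\in{\rm Exp}_{\Theta^{\prime}}(E^{\prime})$; in particular $\mathcal{B}T$ is an entire function on the Banach space $E^{\prime}$, the exponential-type growth of its Taylor coefficients together with the inequality $\|\cdot\|\leq\sigma^{m}\|\cdot\|_{\Theta^{\prime}}$ forcing an infinite radius of convergence. Since $E^{\prime}$ is connected and $\mathcal{B}T$ is a holomorphic function that vanishes on the nonempty open set $U$, the identity principle for holomorphic functions on Banach spaces (see \cite{dineen, mujica}) yields $\mathcal{B}T\equiv0$ on all of $E^{\prime}$.

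Finally, since $\mathcal{B}$ is an algebraic isomorphism, and hence injective, the equality $\mathcal{B}T=0$ forces $T=0$, which completes the argument. The only delicate point is the appeal to the identity principle in the infinite-dimensional setting: one must make sure that $\mathcal{B}T$ is genuinely holomorphic on the connected open set $E^{\prime}$, so that its vanishing on a nonempty open subset propagates to the whole space. Everything else is the routine Hahn--Banach reduction together with the dictionary supplied by the Borel transform.
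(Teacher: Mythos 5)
Your proof is correct and follows essentially the same route as the paper's: a Hahn--Banach density argument (the paper phrases it as a contradiction via the geometric Hahn--Banach theorem, you as the equivalent contrapositive), followed by the observation that $\mathcal{B}T$ vanishes on $U$, the identity principle for holomorphic functions on the connected space $E^{\prime}$, and the injectivity of the Borel transform from Proposition~\ref{Propo isomorfismo Hthetab}. Your explicit attention to why the identity principle applies in the infinite-dimensional setting is a point the paper glosses over, but the underlying argument is identical.
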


\begin{proof}
Assume that $S$ is not dense in $\mathcal{H}_{\Theta b}(E)$. In this case, the geometric Hahn-Banach Theorem gives a nonzero functional $T\in [\mathcal{H}_{\Theta b}(E),
\tau_{\Theta}]^{\prime}$ that vanishes on $\overline{S}$.  In particular $T(e^{\phi})=0$ for each
$\phi\in U$. So $\mathcal{B}T(\phi)=T(e^{\phi})=0$ for every $\phi \in U$.
Thus $\mathcal{B} T$ is a holomorphic function that vanishes on the open non-void set $U$. It follows that $\mathcal{B} T\equiv 0$ on $E'$. Since
$\mathcal{B}$ is injective by Proposition \ref{Propo isomorfismo
Hthetab}, $T\equiv 0$. This contradiction proves that $S$ is dense in $\mathcal{H}_{\Theta b}(E)$.
\end{proof}

\bigskip

Let $\Theta$ be a holomorphy type from $E$ to $\mathbb{C}$. The linear space of all convolution operators on $\mathcal{H}_{\Theta b}(E)$ is denoted by $\mathcal{O}(\mathcal{H}_{\Theta b}(E)).$  We define the map $\Gamma_{\Theta}$ by%
\begin{align*}
\Gamma_{\Theta}  &  \colon\mathcal{O}(\mathcal{H}_{\Theta b}(E))\longrightarrow
\lbrack\mathcal{H}_{\Theta b}(E)]^{\prime}\\
&  \;\;\;\:\:\:\:\;\;\;\;\;\;\;\;\;\;\;\;L\mapsto\Gamma_{\Theta}(L)\colon
\mathcal{H}_{\Theta b}(E)\longrightarrow\;\;\;\mathbb{K}\\
&
\;\;\;\;\;\;\;\;\;\;\;\;\;\;\;\;\:\:\:\:\:\:\;\;\;\;\;\;\;\;\;\;\;\;\;\;\;\;\;\;\;\;\;
\;\;\;\;\;\;f\;\;\mapsto
\;\;\Gamma_{\Theta}(L)(f):=(L\left(  f\right)  )(0).
\end{align*}
Remember the definition of $\delta_0$ to see that $\Gamma_\Theta (L) = \delta_0 \circ L$. It is clear that $\Gamma_{\Theta}$ is a well defined linear map.

\begin{lemma}
\label{L identidade ss B não constante} Let $\Theta$ be a $\pi_{1}$-holomorphy
type from $E$ to $\mathbb{C}$ and $L\in\mathcal{O}(\mathcal{H}_{\Theta b}(E))$ be given. Then:
\begin{enumerate}
\item[{\rm (a)}] $L(e^{\phi})=\mathcal{B}(\Gamma_{\Theta}(L))(\phi)\cdot e^{\phi
}$ for every $\phi\in E^{\prime}.$

\item[{\rm(b)}] $L$ is a scalar multiple of the identity if and only if
$\mathcal{B}(\Gamma_{\Theta}(L))$ is constant.
\end{enumerate}
\end{lemma}

\begin{proof}
(a) Since $\Gamma_{\Theta}(L)\in\lbrack\mathcal{H}_{\Theta b}(E)]^{\prime},$
from Theorem \ref{Propo isomorfismo Hthetab} we know that%
\[
\mathcal{B}(\Gamma_{\Theta}(L))(\phi)=\Gamma_{\Theta}(L)(e^{\phi
})=L(e^{\phi})(0)
\]
for each
$\phi\in E^{\prime}.$ Therefore%
\begin{align*}
L(e^{\phi})(y) &  =[\tau_{-y}(L(e^{\phi}))](0)\\
&  =[L\left(  \tau_{-y}(e^{\phi})\right)  ](0)\\
&  =[L\left(  e^{\phi(y)}\cdot e^{\phi}\right)  ](0)\\
&  =e^{\phi(y)}\cdot L(e^{\phi})(0)\\
&  =e^{\phi(y)}\cdot \mathcal{B}(\Gamma_{\Theta}(L))(\phi)\\
& = \left(\mathcal{B}(\Gamma_{\Theta}(L))(\phi) \cdot e^\phi
\right)(y),
\end{align*}

for all $y\in E.$\newline(b) Let $\lambda\in\mathbb{C}$ be such that
$\mathcal{B}(\Gamma_{\Theta}(L))(\phi)=\lambda$ for every $\phi\in
E^{\prime}.$ By (a) it follows that%
\[
L(e^{\phi})=\mathcal{B}(\Gamma_{\Theta}(L))(\phi)\cdot e^{\phi}=\lambda
e^{\phi}
\]
for every $\phi\in
E^{\prime}.$ The continuity of $L$ and the denseness of $\{e^\phi : \phi \in E'\}$ in $\mathcal{H}_{\Theta b}(E)$ (Proposition \ref{densidade de ephi b}) yield
that $L(f)=\lambda f$ for every $f\in\mathcal{H}_{\Theta b}(E)$.

Conversely, let $\lambda\in\mathbb{C}$ be such that $L(f)=\lambda f$ for every
$f\in\mathcal{H}_{\Theta b}(E)$. Calling on (a) again we get%

\[
\lambda e^{\phi}=L(e^{\phi})=\mathcal{B}(\Gamma_{\Theta}%
(L))(\phi)\cdot e^{\phi},
\]
hence $\mathcal{B}(\Gamma_{\Theta}(L))(\phi)=\lambda$ for every
$\phi\in E^{\prime}.$
\end{proof}

In the proof of our main result we shall use the following
criterion, which was obtained, independently, by Kitai
\cite{kitai} and Gethner and Shapiro \cite{gethner}:

\begin{theorem}
\label{Hypercyclity criterion} {\rm(Hypercyclicity Criterion)} Let
$X$ be a separable Fr\'echet space. A
continuous linear operator $T \colon X
\longrightarrow X$ is hypercyclic if there are dense subsets
$Z\mathrm{,} \,Y\subseteq X$ and a map $S\colon Y \longrightarrow
Y$ satisfying the following three conditions:

\item {\rm(a)} For each $z\in Z$, $T^{n}(z) \longrightarrow0$ when
$n\longrightarrow\infty$;

\item {\rm(b)} For each $y\in Y$, $S^{n}(y) \longrightarrow0$ when
$n\longrightarrow\infty$;

\item {\rm(c)} $T\circ S= I_{Y}$.
\end{theorem}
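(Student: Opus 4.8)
The plan is to derive hypercyclicity from \emph{topological transitivity} via the Birkhoff transitivity theorem. A separable Fr\'echet space $X$ is a separable, completely metrizable (hence Baire) topological vector space, and—being a nontrivial vector space—it has no isolated points; for such a space the Birkhoff theorem guarantees that a continuous linear operator $T\colon X\to X$ is hypercyclic (indeed, its set of hypercyclic vectors is a dense $G_{\delta}$) as soon as $T$ is topologically transitive, meaning that for every pair of non-empty open sets $U,V\subseteq X$ there exists $n$ with $T^{n}(U)\cap V\neq\emptyset$. I would therefore reduce the theorem to verifying this transitivity property from hypotheses (a)--(c).

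To check transitivity, fix non-empty open sets $U,V\subseteq X$. Since $Z$ is dense I may pick $z\in Z\cap U$, and since $Y$ is dense I may pick $y\in Y\cap V$. The crucial construction is the sequence
\[
x_{n}=z+S^{n}(y),
\]
which makes sense because $S$ maps $Y$ into itself, so each iterate $S^{n}(y)$ is defined. The idea is to exhibit a single point whose $n$-th iterate under $T$ travels from the neighbourhood $U$ of $z$ to the neighbourhood $V$ of $y$; I would then track the two relevant limits at once.

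First, condition (b) gives $S^{n}(y)\to 0$, so $x_{n}\to z\in U$, and openness of $U$ forces $x_{n}\in U$ for all large $n$. Second, since $T\circ S=I_{Y}$ and $S^{k}(y)\in Y$ for every $k$, the telescoping identity
\[
T^{n}\bigl(S^{n}(y)\bigr)=T^{n-1}\bigl(S^{n-1}(y)\bigr)=\cdots=y
\]
holds, each step using $T(S(w))=w$ with $w=S^{k-1}(y)\in Y$. By linearity of $T$ this yields $T^{n}(x_{n})=T^{n}(z)+T^{n}(S^{n}(y))=T^{n}(z)+y$, and condition (a) gives $T^{n}(z)\to 0$, so $T^{n}(x_{n})\to y\in V$; openness of $V$ forces $T^{n}(x_{n})\in V$ for all large $n$. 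Choosing $n$ large enough for both conclusions produces a point $x_{n}\in U$ with $T^{n}(x_{n})\in V$, i.e. $T^{n}(U)\cap V\neq\emptyset$, which is exactly transitivity.

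The genuinely delicate step is not the construction—once one seeks a point whose orbit runs from near $z$ to near $y$, the sum $z+S^{n}(y)$ is essentially forced—but the justification of the reduction to Birkhoff's theorem, where one must confirm that $X$ really supports it: separability supplies a countable base, completeness supplies the Baire property, and the linear structure (with $X\neq\{0\}$) rules out isolated points, all three being needed to upgrade transitivity to the existence of a dense orbit. If one preferred a self-contained argument avoiding Birkhoff, the same transitivity computation shows that, for a countable base $(V_{j})$ of $X$, each set $\bigcup_{n}T^{-n}(V_{j})$ is dense and open, so by the Baire category theorem their intersection—precisely the set of hypercyclic vectors—is a dense $G_{\delta}$.
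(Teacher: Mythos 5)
Your proof is correct. Note first that the paper itself gives no proof of this statement: it is quoted as a known result, cited to Kitai's dissertation and to Gethner--Shapiro, so there is no "paper proof" to compare against; what you have written is essentially the standard published argument. Your transitivity computation is sound: $x_n = z + S^n(y)$ lies in $U$ eventually because $S^n(y)\to 0$, the telescoping identity $T^n(S^n(y)) = y$ is justified precisely because $S$ maps $Y$ into $Y$ so that $T\circ S = I_Y$ can be applied at each stage, and linearity of $T$ then gives $T^n(x_n) = T^n(z) + y \to y \in V$; choosing one $n$ large enough for both conclusions yields $T^n(U)\cap V \neq \emptyset$. The reduction to a dense $G_\delta$ of hypercyclic vectors via the sets $\bigcup_n T^{-n}(V_j)$ is also correct, and it is good that you included it, since it makes the argument self-contained (openness follows from continuity of $T$, density from the transitivity you proved, and Baire applies because a Fr\'echet space is completely metrizable). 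One small inaccuracy: the absence of isolated points is \emph{not} needed for the implication you use (transitivity $\Rightarrow$ dense orbit); it is only needed for the converse implication. Your own Baire argument at the end confirms this, since it never invokes isolated points, so the slip is harmless. Note also that neither linearity nor continuity of $S$ is used anywhere, consistent with the statement, which asks only for a map $S\colon Y\to Y$.
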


The last ingredient we need to give the proof of Theorem \ref{main1} is the next result.

\begin{proposition}\label{Linearmente indep}
Let $( \mathcal{P}_{\Theta}(^{m}E))_{m=0}^\infty$ be a
$\pi_{1}$-holomorphy type from $E$ to $\mathbb{C}$. Then the set
$$B=\{e^\phi : \phi\in E^{\prime}\}$$

\noindent is a linearly independent subset of $\mathcal{H}_{\Theta
b}(E)$. 
\end{proposition}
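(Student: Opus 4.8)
The plan is to reduce this infinite-dimensional statement to the classical linear independence of exponential functions of a single complex variable. Note first that linear independence is a purely algebraic property, so the $\pi_1$-holomorphy type hypothesis enters only through the remark made before Proposition \ref{Propo isomorfismo Hthetab}, which guarantees that each $e^{\phi}$ genuinely belongs to $\mathcal{H}_{\Theta b}(E)$; once that is granted, the holomorphy type structure is irrelevant to the argument.

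First I would fix finitely many distinct functionals $\phi_1,\dots,\phi_n\in E^{\prime}$ and scalars $c_1,\dots,c_n\in\mathbb{C}$ satisfying $\sum_{j=1}^{n}c_j e^{\phi_j}=0$ in $\mathcal{H}_{\Theta b}(E)$, with the goal of showing $c_1=\cdots=c_n=0$. Evaluating at points of $E$, this identity reads $\sum_{j=1}^{n}c_j e^{\phi_j(x)}=0$ for every $x\in E$.

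The crucial step is to produce a single vector $a\in E$ on which the functionals separate, that is, such that the complex numbers $\lambda_j:=\phi_j(a)$ are pairwise distinct. Since the $\phi_j$ are pairwise distinct, each difference $\phi_i-\phi_j$ with $i\neq j$ is a nonzero continuous functional, so its kernel $H_{ij}:=\ker(\phi_i-\phi_j)$ is a proper closed subspace of $E$. As a vector space over the infinite field $\mathbb{C}$ cannot be written as the union of finitely many proper subspaces, there exists $a\in E\setminus\bigcup_{i\neq j}H_{ij}$, and for this $a$ the values $\lambda_j=\phi_j(a)$ are pairwise distinct. Restricting the vanishing identity to the complex line through $a$, the function $t\mapsto\sum_{j=1}^{n}c_j e^{t\lambda_j}$ is identically zero on $\mathbb{C}$; the classical linear independence of the one-variable exponentials $e^{\lambda_1 t},\dots,e^{\lambda_n t}$ with pairwise distinct exponents (obtained, for instance, by differentiating repeatedly at $t=0$ and inverting the resulting Vandermonde system, or by viewing them as eigenvectors of $d/dt$ attached to distinct eigenvalues) then forces every $c_j$ to vanish.

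I expect the only point demanding care to be the selection of the separating vector $a$: it is precisely the fact that $E$ is not a finite union of proper subspaces—valid because the scalar field is infinite—that legitimizes the reduction to one variable, after which the conclusion is routine.
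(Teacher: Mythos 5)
Your proof is correct, but it follows a different route from the paper's. The paper's own argument introduces the differentiation operator $D_a(f)=df(\cdot)(a)$, whose well-definedness on $\mathcal{H}_{\Theta b}(E)$ is a nontrivial fact imported from \cite[Proposition 3.1(i)]{favaro-jatoba1}, and then defers to the proof of \cite[Lemma 2.3]{aron}: there the exponentials $e^{\phi}$ are eigenvectors of $D_a$ with eigenvalue $\phi(a)$, and independence follows from the eigenvector structure (after choosing $a$ so that the relevant eigenvalues differ, which is the same separating-vector step you perform). Your argument replaces this operator-theoretic machinery with pointwise evaluation: restricting the vanishing combination to the complex line $t\mapsto ta$ through a separating vector $a$ reduces everything to the classical independence of one-variable exponentials with distinct exponents (Vandermonde). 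This buys self-containedness and minimal hypotheses --- you only need that elements of $\mathcal{H}_{\Theta b}(E)$ are genuine functions and that each $e^{\phi}$ lies in the space, so the $\pi_1$ hypothesis indeed enters only through membership, and you never need $D_a$ to act continuously (or at all) on $\mathcal{H}_{\Theta b}(E)$. What the paper's route buys is brevity and coherence with its general framework: the eigenvector viewpoint on $\{e^{\phi}\}$ under translation-invariant operators is exactly the mechanism driving the hypercyclicity proofs elsewhere in the paper, so citing \cite{aron} keeps the exposition uniform. Both arguments hinge on the same combinatorial fact you isolate --- that $E$ is not a finite union of proper subspaces over the infinite field $\mathbb{C}$ --- so your care on that point is well placed.
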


\begin{proof} We have already remarked that $\{e^\phi : \phi\in E^{\prime}\} \subseteq \mathcal{H}_{\Theta b}(E)$ whenever
$\Theta$ is a $\pi_1$-holomorphy type. Given $a \in E$, from
\cite[Propostion 3.1(i)]{favaro-jatoba1} we know that the
differentiation operator
$$D_a\colon\mathcal{H}_{\Theta
b}(E)\longrightarrow\mathcal{H}_{\Theta b}(E)~,~D_a\left(
f\right) =df\left(  \cdot\right)  \left(  a\right)$$
is well defined. Now one just has to follow the lines of the proof of \cite[Lemma 2.3]{aron} to get the result. 
\end{proof}

\noindent\textbf{Proof of Theorem \ref{main1}}.
Let $L \colon\mathcal{H}_{\Theta b}(E) \longrightarrow\mathcal{H}_{\Theta
b}(E)$ be a convolution operator which is not a scalar multiple of the
identity. We shall show that $L$ satisfies the Hypercyclicity Criterion of Theorem \ref{Hypercyclity criterion}. First of all, since $E'$ is separable and $\Theta$ is a $\pi_1$-holomorphy type, we have that $\mathcal{H}_{\Theta b}(E)$ is separable as well. We have already remarked that $\mathcal{H}_{\Theta b}(E)$ is a Fr\'echet space. By $\Delta$ we mean the open unit disk in $\mathbb{C}$. Consider the sets%

\[
V=\{\phi\in E^{\prime}: |\mathcal{B}(\Gamma_{\Theta}(L))(\phi)| <1 \}=\mathcal{B}(\Gamma_{\Theta}(L))^{-1}(\Delta)
\]

\noindent and%

\[
W=\{\phi\in E^{\prime}: |\mathcal{B}(\Gamma_{\Theta}(L))(\phi)| >1 \}=\mathcal{B}(\Gamma_{\Theta}(L))^{-1}(\mathbb{C}-\overline{\Delta}).
\]
Since $L$ is not a scalar multiple of the identity, Lemma
\ref{L identidade ss B não constante}(b) yields that $\mathcal{B}(\Gamma_{\Theta}(L))$ is non
constant. Therefore, it follows from Liouville's Theorem that $V$ and $W$ are non-empty open subsets of $E^{\prime}$. Consider now the following subspaces of  $\mathcal{H}_{\Theta
b}(E)$:
\[
H_V = {\rm span}\{e^{\phi}: \phi\in V\} {\rm ~~and~~}H_W ={\rm span}\{e^{\phi}: \phi\in W\}.
\]%
By Proposition \ref{densidade de ephi b} we know that both $H_V$ and $H_W$ are dense in $\mathcal{H}_{\Theta
b}(E)$.

Let us deal with $H_V$ first. Given $\phi \in V$, from Lemma \ref{L identidade ss B não constante}(a) we have
\[
L(e^{\phi})=\mathcal{B}(\Gamma_{\Theta}(L))(\phi)\cdot e^{\phi} \in H_V.
\]
So $L(H_V) \subseteq H_V$ because $L$ is linear. 
Applying Lemma \ref{L identidade ss B não constante}(a) and the linearity of $L$ once again we get
\[
L^{n}(e^{\phi})=[\mathcal{B}(\Gamma_{\Theta}(L))(\phi)]^{n}\cdot e^{\phi}
\]
for all $n\in\mathbb{N}$ and $\phi \in V$. Consequently,
\[
L^{n}(f)=[\mathcal{B}(\Gamma_{\Theta}(L))(\phi)]^{n}\cdot f
\]
for all $n\in\mathbb{N}$ and $f\in H_V$. Since $|\mathcal{B}(\Gamma_{\Theta}(L))(\phi)|< 1$ whenever $\phi\in V$, it follows that $L^{n}(f)\longrightarrow 0$ when
$n\longrightarrow\infty$ for each $f\in H_V$.

Now we handle $H_W$. For each $\phi\in W$, $\mathcal{B}(\Gamma_{\Theta}(L))(\phi) \neq 0$, so we can define
\[
S(e^{\phi}):=\dfrac{e^{\phi}}{\mathcal{B}(\Gamma_{\Theta}(L))(\phi)} \in \mathcal{H}_{\Theta b}(E).
\]
By Proposition \ref{Linearmente indep},
$\{e^{\phi}: \phi\in W\}$ is a linearly independent set, so we can extend $S$ to
$H_W$ by linearity. Therefore $S(H_W)\subseteq H_W$ and
\[
S^{n}(f)=\dfrac{f}{[\mathcal{B}(\Gamma_{\Theta}(L))(\phi)]^{n}}
\]
for all $f\in H_W$ and $n\in\mathbb{N}$. Since $|\mathcal{B}(\Gamma_{\Theta}(L))(\phi)|> 1$ whenever $\phi\in W$, it follows that $S^{n}(f)\longrightarrow0$ when
$n\longrightarrow\infty$ for each $f\in H_W$.

Finally, $L\circ S(f)=f$ for every $f\in
H_W$, so $L$ is hypercyclic.\hfill$\Box$



\bigskip

Let us proceed to the proof of Theorem \ref{main2}. The next result is needed. It is an adaptation of \cite[Theorem 3.1]{favaro-jatoba1} to the new definition of $\pi_2$-holomorphy type. In this case it is worth giving the details.

\begin{proposition}
\label{Teorema T* convolucao} If $(\mathcal{P}_{\Theta}(^{m}E))_{m=0}^{\infty
}$ is a $\pi_{2}$-holomorphy type from $E$ to $\mathbb{C}$, $T\in\lbrack\mathcal{H}_{\Theta
b}(E)]^{\prime}$ and $f\in\mathcal{H}_{\Theta b}(E),$ then $T\ast
f\in\mathcal{H}_{\Theta b}(E)$ and the mapping $T\ast$ defines a convolution
operator on $\mathcal{H}_{\Theta b}(E)$.
\end{proposition}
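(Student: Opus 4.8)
The statement to prove is that for a $\pi_2$-holomorphy type, the convolution product $T * f$ of a functional $T \in [\mathcal{H}_{\Theta b}(E)]'$ with a function $f \in \mathcal{H}_{\Theta b}(E)$ again lies in $\mathcal{H}_{\Theta b}(E)$, and that $T *$ is a convolution operator (i.e.\ continuous, linear, and translation-invariant). The natural strategy is to analyze the Taylor coefficients of $T * f$ at the origin, express them in terms of the data $T$ and $f$, use the defining estimates (b1) and (b2) of a $\pi_2$-holomorphy type to keep each coefficient inside $\mathcal{P}_\Theta(^m E)$ with controlled norm, and finally assemble these bounds into a continuity estimate on the seminorms $\|\cdot\|_{\Theta,\rho}$.

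\medskip

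\noindent\textbf{Step 1: Identify the Taylor coefficients of $T*f$.}
First I would compute $\frac{1}{k!}\hat{d}^{k}(T*f)(0)$ explicitly. Writing $f = \sum_{m} \frac{1}{m!}\hat{d}^m f(0)$ and using the definition $(T*f)(x) = T(\tau_{-x}f)$ together with the Taylor expansion of $\tau_{-x}f$, one expands $T(\tau_{-x}f)$ as a function of $x$ and reads off the $k$-homogeneous part. The upshot is a formula expressing $\frac{1}{k!}\hat{d}^{k}(T*f)(0)$ as a convergent sum over $m \ge k$ of terms of the form $T\bigl(\widehat{A_m(\cdot)^{k}}\bigr)$, where $A_m$ is the symmetric $m$-linear form associated to $\frac{1}{m!}\hat{d}^m f(0)$. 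This is exactly the kind of polynomial controlled by conditions (b1)--(b2), which is why those conditions are tailored to this computation; this identification is the step on which everything hinges.

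\medskip

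\noindent\textbf{Step 2: Membership and coefficient-wise norm bounds.}
By condition (b1) each term $T(\widehat{A_m(\cdot)^{k}})$ belongs to $\mathcal{P}_\Theta(^{m-k}E)$, and since $\mathcal{P}_\Theta(^{k}E)$ is a Banach space the convergent sum defining $\frac{1}{k!}\hat{d}^{k}(T*f)(0)$ lands in $\mathcal{P}_\Theta(^{k}E)$, giving condition (1) of Definition \ref{Definition f holomorfia}. For the quantitative part I would fix constants $C,\rho>0$ with $|T(g)| \le C\|g\|_{\Theta,\rho}$ for all $g$, and apply the estimate in (b2) to bound each $\|T(\widehat{A_m(\cdot)^{k}})\|_\Theta$ by $C K^{m}\rho^{\,\cdot}\,\|\tfrac{1}{m!}\hat d^m f(0)\|_\Theta$ up to combinatorial factors. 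Summing these bounds and invoking condition (2) of Definition \ref{Definition f holomorfia} for $f$ (the root test $\lim (\frac{1}{m!}\|\hat d^m f(0)\|_\Theta)^{1/m}=0$) shows the resulting series of coefficients decays fast enough, so $T*f \in \mathcal{H}_{\Theta b}(E)$.

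\medskip

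\noindent\textbf{Step 3: Continuity and translation invariance.}
The same summation, carried out with the $\rho$ from the bound on $T$ absorbed suitably, should yield an estimate of the form $\|T*f\|_{\Theta,\rho_1} \le C'\,\|f\|_{\Theta,\rho_2}$ for an appropriate pair $\rho_1,\rho_2$, which establishes continuity of the linear map $f \mapsto T*f$ on the Fr\'echet space $\mathcal{H}_{\Theta b}(E)$. Linearity is immediate. Translation invariance, $L(\tau_a f) = \tau_a(L f)$, follows from the elementary identity $\tau_{-x}(\tau_a f) = \tau_{-(x-a)}f$ applied inside the definition $(T*g)(x)=T(\tau_{-x}g)$, which is a short formal verification. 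The main obstacle is Step 2: making the root-test decay of $f$ interact correctly with the geometric factor $K^m$ coming from (b2) so that the double sum (over $m$ and, after reindexing, over the homogeneity degree $k$) converges for every $\rho$ and produces a clean continuity estimate. Keeping careful track of the combinatorial constants $\binom{m}{k}$ and the powers of $\rho$ is where the real work lies.
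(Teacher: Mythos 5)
Your plan is essentially the paper's own proof: expand $(T\ast f)(x)=T(\tau_{-x}f)$ into homogeneous components via the Taylor series of $\tau_{-x}f$, use conditions (b1) and (b2) of Definition \ref{pi-tipo de holomorfia} to place each term in the right $\mathcal{P}_{\Theta}(^{m}E)$ with the bound $CK^{m+k}\rho^{k}\Vert\hat{d}^{m+k}f(0)\Vert_{\Theta}$, sum these bounds (absorbing the binomial coefficients into a factor $(2K)^{m+k}$, which is precisely the combinatorial bookkeeping you defer) to obtain $\Vert P_{m}\Vert_{\Theta}\leq C\,m!\,\rho_{0}^{-m}\Vert f\Vert_{\Theta,2K\rho_{0}}$, from which both the root-test decay and the continuity estimate $\Vert T\ast f\Vert_{\Theta,\rho_{1}}\leq C'\Vert f\Vert_{\Theta,2K(\rho_{1}+\rho)}$ follow, and finish with the same one-line translation-invariance computation. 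The only blemish is an index slip in your Step 1: with $A_{m}$ the symmetric $m$-linear form of $\frac{1}{m!}\hat{d}^{m}f(0)$, the $k$-homogeneous component of $T\ast f$ is built from the terms $T\bigl(\widehat{A_{m}(\cdot)^{m-k}}\bigr)$ (the functional eats $m-k$ slots, leaving $k$ for the outer variable), not $T\bigl(\widehat{A_{m}(\cdot)^{k}}\bigr)$, but this does not affect the argument.
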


\begin{proof}
Since $T\in\left[\mathcal{H}_{\Theta b}(E)\right]^{\prime}$, there are constants $C\geq0$ and $\rho>0$ such that
\begin{equation*}\label{desigT}
\left\vert T\left(  f\right)  \right\vert \leq C\left\Vert
f\right\Vert _{\Theta,\rho}
\end{equation*}
for all $f\in\mathcal{H}_{\Theta b}(E)$.
By \cite[Proposition 3.1]{favaro-jatoba1},
\begin{align}
(T\ast f)(x)& =T(\tau_{-x}f)=T\left(  \sum_{m=0}^{\infty}\frac{1}{m!}\hat{d}%
^{m}f(x)\right) \nonumber\\
&=\sum_{m=0}^{\infty}\frac{1}{m!}\sum_{k=0}^{\infty}\frac{1}{k!}%
T(\Hhat{d^{k+m}f(0)(\cdot)^k})(x) \label{des3.5}%
\end{align}
for every $x \in E$. By Definition \ref{pi-tipo de holomorfia}(b) there is a constant $K$ such that
$$T\left(\Hhat{d^{k+m}f(0)(\cdot)^k}\right)\in
\mathcal{P}_{\Theta}(^{m}E) {\rm ~~  and}$$%
\[
\left\Vert T\left(\Hhat{d^{k+m}f(0)(\cdot)^k}\right)\right\Vert _{\Theta}\leq
CK^{m+k}\rho ^{k}\left\Vert \hat{d}^{m+k}f(0)\right\Vert _{\Theta}
\]
for all $k,m \in \mathbb{N}_0$. For $\rho_{0}>\rho$ we can write%
\begin{align*}
\left\Vert \sum_{k=0}^{\infty}\frac{1}{k!}T\left(  \overset
{\begin{picture}(60,5)\put(0,0){\line(6,1){30}}\put(30,5){\line(6,-1){30}}\end{picture}}%
{d^{k+m}f(0)(\cdot)^{k}}\right)  \right\Vert _{\Theta}& \leq \sum_{k=0}^{\infty
}\frac{1}{k!}\left\Vert T\left(  \overset
{\begin{picture}(60,5)\put(0,0){\line(6,1){30}}\put(30,5){\line(6,-1){30}}\end{picture}}%
{d^{k+m}f(0)(\cdot)^{k}}\right)  \right\Vert _{\Theta}\\
&\leq\sum_{k=0}^{\infty}\frac{1}{k!}CK^{m+k}\rho^{k}\left\Vert \hat{d}%
^{m+k}f(0)\right\Vert _{\Theta}\\
&\leq\sum_{k=0}^{\infty}\frac{1}{k!}CK^{m+k}%
\rho_{0}^{k}\left\Vert \hat{d}^{m+k}f(0)\right\Vert _{\Theta}\\
&=C\frac{m!}{\rho_{0}^{m}}\sum_{k=0}^{\infty}\frac{\left(  m+k\right)  !}%
{m!k!}\cdot\frac{K^{m+k}}{\left(  m+k\right)  !}\rho_{0}^{m+k}\left\Vert
\hat{d}^{m+k}f(0)\right\Vert _{\Theta}\\
&\leq C\frac{m!}{\rho_{0}^{m}}\sum_{k=0}^{\infty}\frac{(2K)^{m+k}}{(m+k)!}%
\rho_{0}^{m+k}\left\Vert \hat{d}^{m+k}f(0)\right\Vert _{\Theta}\\
&=C\frac{m!}{\rho_{0}^{m}}\left\Vert \sum_{k=m}^{\infty}\frac{1}{k!}\hat{d}%
^{k}f(0)\right\Vert _{\Theta,2K\rho_{0}}\leq C\frac{m!}{\rho_{0}^{m}}\Vert
f\Vert_{\Theta,2K\rho_{0}}<\infty.
\end{align*}
This means that
\[
P_{m}=\sum_{k=0}^{\infty}\frac{1}{k!}T\left(
\Hhat{d^{k+m}f(0)(\cdot)^k}\right)
\]
belongs to $\mathcal{P}_{\Theta}(^{m}E)$ and%
\begin{eqnarray}
\Vert P_{m}\Vert_{\Theta}\leq C\frac{m!}{\rho_{0}^{m}}\Vert f\Vert
_{\Theta,2K\rho_{0}}.\label{des3.7}
\end{eqnarray}
Hence%
\[
\lim_{m\rightarrow\infty}\left(  \frac{1}{m!}\Vert
P_{m}\Vert_{\Theta}\right) ^{\frac{1}{m}}\leq\frac{1}{\rho_{0}}
\]
for every $\rho_{0}>\rho.$ This implies that%
\[
\lim_{m\rightarrow\infty}\left(  \frac{1}{m!}\Vert
P_{m}\Vert_{\Theta}\right) ^{\frac{1}{m}}=0.
\]
Therefore, it follows from $(\ref{des3.5})$ that $(T\ast
f)=\displaystyle\sum_{m=0}^{\infty}\frac{1}{m!}P_{m} \in
\mathcal{H}_{\Theta b}(E).$ It is clear that
$T\ast$ is
linear. For $\rho_{1}>0$, from $(\ref{des3.7})$ we get%
\begin{align*}
\Vert T\ast f\Vert_{\Theta,\rho_{1}}&=\sum_{m=0}^{\infty}\frac{\rho_{1}^{m}%
}{m!}\Vert P_{m}\Vert_{\Theta}\\
&\leq\sum_{m=0}^{\infty}\frac{C\rho_{1}^{m}}%
{m!}\frac{m!}{(\rho_{1}+\rho)^{m}}\Vert f\Vert_{\Theta,2K(\rho_{1}+\rho)}\\
&= \left(  \sum_{m=0}^{\infty}\frac{C\rho_{1}^{m}}{(\rho_{1}+\rho)^{m}%
}\right)  \Vert f\Vert_{\Theta,2K(\rho_{1}+\rho)},
\end{align*}
proving that $T\ast$ is continuous. Now we have%
\begin{align*}
(T\ast\tau_{a}f)(x)&=T(\tau_{-x}\circ\tau_{a}f)=T(\tau_{-x+a}f)\\
&=(T\ast f)(-(-x+a))\\&=(T\ast f)(x-a)=\tau_{a}(T\ast f)(x),
\end{align*}
for all $x,a\in E$. This completes the proof that $T\ast$ is a
convolution operator.
\end{proof}

\noindent\textbf{Proof of Theorem \ref{main2}}. The operator $\bar{\Gamma}_{\Theta}(T)$ is a convolution operator for each
$T\in[\mathcal{H}_{\Theta b}(E)]^{\prime}$ by Proposition \ref{Teorema T* convolucao}. Suppose that there
is $\lambda\in\mathbb{C}$ such that $\bar{\Gamma}_{\Theta}(T)(f)=\lambda\cdot
f$ for all $f\in\mathcal{H}_{\Theta b}(E)$. Then
\[
\lambda\cdot f(x)=\bar{\Gamma}_{\Theta}(T)(f)(x)=(T\ast f)(x)=T(\tau
_{-x}f)
\]
for every $x \in E$. In particular,
\[
\lambda\cdot\delta_{0}(f)=\lambda\cdot f(0)=T(\tau_{0}f)=T(f)
\]
for every $f\in\mathcal{H}_{\Theta b}(E)$. Hence $T=\lambda\cdot\delta_{0}$. This contradiction shows that $\bar{\Gamma}_{\Theta}(T)$ is not a scalar multiple of the identity, hence hypercyclic by Theorem \ref{main1}.\hfill$\Box$

\section{Further results}
In this section we show that several related results that appear in the literature have analogues in the context of $\pi_j$-holomorphy types, $j = 1,2$.

We start with an analogue of \cite[Corollary 8]{aronmarkose}:

\begin{proposition}
\label{dense_range}If $E^{\prime}$ is separable and $(\mathcal{P}_{\Theta}(^{n}E))_{n=0}^{\infty
}$ is a $\pi_{1}$-holomorphy type from $E$ to $\mathbb{C}$, then every nonzero convolution
operator on $\mathcal{H}_{\Theta b}(E)$ has dense range.
\end{proposition}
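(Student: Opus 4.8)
The plan is to prove that every nonzero convolution operator $L$ on $\mathcal{H}_{\Theta b}(E)$ has dense range by a Hahn–Banach/duality argument combined with the structure theory developed earlier in the paper. First I would suppose, aiming at a contradiction, that the range $L(\mathcal{H}_{\Theta b}(E))$ is \emph{not} dense. Since $\mathcal{H}_{\Theta b}(E)$ is a Fréchet (hence locally convex) space, the Hahn–Banach Theorem supplies a nonzero functional $S\in[\mathcal{H}_{\Theta b}(E)]^{\prime}$ vanishing on the closure of the range, i.e. $S(L(f))=0$ for every $f\in\mathcal{H}_{\Theta b}(E)$. The goal is then to exploit the commutation of $L$ with translations together with the Borel-transform machinery (Proposition \ref{Propo isomorfismo Hthetab} and Lemma \ref{L identidade ss B n\~ao constante}) to force either $S\equiv 0$ or $L\equiv 0$, contradicting the hypotheses.

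The key computational step is to evaluate $S$ on the exponential functions $e^{\phi}$, $\phi\in E^{\prime}$, which Proposition \ref{densidade de ephi b} tells us span a dense subspace and whose behaviour under $L$ is completely transparent. By Lemma \ref{L identidade ss B n\~ao constante}(a) we have $L(e^{\phi})=\mathcal{B}(\Gamma_{\Theta}(L))(\phi)\cdot e^{\phi}$, so the condition $S\circ L=0$ applied to $e^{\phi}$ yields
\[
0=S(L(e^{\phi}))=\mathcal{B}(\Gamma_{\Theta}(L))(\phi)\cdot S(e^{\phi})=\mathcal{B}(\Gamma_{\Theta}(L))(\phi)\cdot \mathcal{B}(S)(\phi)
\]
for every $\phi\in E^{\prime}$, where I have used Proposition \ref{Propo isomorfismo Hthetab} to identify $S(e^{\phi})=\mathcal{B}(S)(\phi)$. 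Thus the product of the two entire functions $\mathcal{B}(\Gamma_{\Theta}(L))$ and $\mathcal{B}(S)$ on $E^{\prime}$ vanishes identically.

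To finish I would invoke the fact that a product of two holomorphic functions on a connected open set (here all of $E^{\prime}$) vanishes identically only if one of the factors does. Since $L$ is nonzero, $\Gamma_{\Theta}(L)=\delta_0\circ L$ need not itself be nonzero, so the cleaner route is: if $\mathcal{B}(S)$ were not identically zero it would be nonzero on a dense open set, forcing $\mathcal{B}(\Gamma_{\Theta}(L))\equiv 0$, whence $\Gamma_{\Theta}(L)=0$; one then checks using translation invariance that $\Gamma_{\Theta}(L)=0$ together with $L$ being a convolution operator forces $L=0$, contradicting $L\neq 0$. Therefore $\mathcal{B}(S)\equiv 0$, and the injectivity of $\mathcal{B}$ (Proposition \ref{Propo isomorfismo Hthetab}) gives $S=0$, the desired contradiction. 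The main obstacle I anticipate is the last identity tying $\Gamma_{\Theta}(L)=0$ back to $L=0$: one must argue that a nonzero convolution operator cannot have $\delta_0\circ L\equiv 0$, which follows because $L(f)(x)=(\Gamma_{\Theta}(L)\ast f)(x)$-type reconstruction recovers $L$ entirely from $\Gamma_{\Theta}(L)$ via translation invariance, so the zero functional yields the zero operator.
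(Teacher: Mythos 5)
Your proof is correct, but it takes a genuinely different route from the paper's. The paper argues directly: after disposing of the trivial case where $L$ is a (nonzero) scalar multiple of the identity, it uses the eigenvalue identity $L(e^{\phi})=\mathcal{B}(\Gamma_{\Theta}(L))(\phi)\cdot e^{\phi}$ to conclude that the exponentials lie in the range of $L$, so that the closure of the range contains the dense subspace ${\rm span}\{e^{\phi}:\phi\in E^{\prime}\}$ of Proposition \ref{densidade de ephi b}. (Strictly speaking, that one-line claim only produces $e^{\phi}$ in the range when $\mathcal{B}(\Gamma_{\Theta}(L))(\phi)\neq 0$; the repair is to apply Proposition \ref{densidade de ephi b} to the nonempty open set where $\mathcal{B}(\Gamma_{\Theta}(L))$ does not vanish, nonempty because this function is non-constant.) You instead run a duality argument: a nonzero functional $S$ annihilating the range gives $\mathcal{B}(\Gamma_{\Theta}(L))\cdot\mathcal{B}(S)\equiv 0$ on $E^{\prime}$, and the identity theorem together with the injectivity of the Borel transform (Proposition \ref{Propo isomorfismo Hthetab}) forces either $S=0$ or $\Gamma_{\Theta}(L)=0$; the latter yields $L=0$ via the reconstruction $L(f)(x)=[\tau_{-x}L(f)](0)=L(\tau_{-x}f)(0)=\Gamma_{\Theta}(L)(\tau_{-x}f)$, which is precisely the identity the paper proves later in Proposition \ref{unique_functional} (its proof uses only translation invariance, so there is no circularity in invoking it here). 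What each approach buys: your route treats scalar multiples of the identity and the possible zeros of $\mathcal{B}(\Gamma_{\Theta}(L))$ uniformly, never needs the non-constancy part of the eigenvalue lemma, and isolates the useful fact that $\Gamma_{\Theta}(L)=0$ forces $L=0$; the paper's route is shorter given the lemmas already in place and yields the slightly stronger conclusion that an explicit family of exponentials lies \emph{inside} the range, not merely in its closure. Neither argument actually uses the separability of $E^{\prime}$.
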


\begin{proof}
Let $L\neq0$ be a convolution operator. If $L$ is a scalar multiple of the identity, then clearly $L$ is
surjective. Suppose now that $L$ is not a scalar multiple of the identity. By Proposition \ref{densidade de ephi b}, ${\rm span}\{e^{\phi}: \phi\in E^{\prime
}\}$ is dense in $\mathcal{H}_{\Theta b}(E)$. By Lemma
\ref{L identidade ss B não constante}, $L(e^{\phi})=\mathcal{B}(\Gamma_{\Theta}(L))(\phi)\cdot e^{\phi}$ for every $\phi \in E'$,  and this implies that each $e^{\phi}$ belongs to the range of $L$.
Therefore,
\[
\mathcal{H}_{\Theta b}(E)=\overline{{\rm span}\{e^{\phi}: \phi\in E^{\prime}%
\}}^{\,\tau_{\theta}}=\overline{L(\mathcal{H}_{\Theta b}(E)))}^{\,\tau_{\theta}}.%
\]

\end{proof}

We can go farther with $\pi_1$-$\pi_2$-holomorphy types. The following result is closely related to a result of Malgrange \cite{malgrange} on the existence of solutions of convolution equations. Its proof follows the sames steps of the proof of \cite[Theorem 4.4]{favaro-jatoba1}:

\begin{theorem}
\label{conv_op_surjective}Let $(\mathcal{P}_{\Theta}(^{n}E))_{n=0}^{\infty
}$ be a $\pi_{1}$-$\pi_2$-holomorphy type from $E$ to $\mathbb{C}$ such that ${\rm Exp}_{\Theta^{\prime}}(E^{\prime})$ is closed under division, that is: if $f,g \in {\rm Exp}_{\Theta^{\prime}}(E^{\prime})$ are such that $g \neq 0$ and $f/g$ is holomorphic, then $f/g \in {\rm Exp}_{\Theta^{\prime}}(E^{\prime})$. Then every nonzero convolution
operator on $\mathcal{H}_{\Theta b}(E)$ is surjective.
\end{theorem}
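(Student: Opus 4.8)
The plan is to derive surjectivity from the classical duality criterion for Fr\'echet spaces: a continuous linear operator $u$ between Fr\'echet spaces is onto if and only if its transpose $u^{t}$ is injective and has weak-$\ast$ closed range. Since $\mathcal{H}_{\Theta b}(E)$ is Fr\'echet, it suffices to analyze the transpose $L^{t}$ of the given nonzero convolution operator $L$. Writing $T=\Gamma_{\Theta}(L)$ and $\Phi=\mathcal{B}(\Gamma_{\Theta}(L))\in{\rm Exp}_{\Theta^{\prime}}(E^{\prime})$, I would first record that $\Phi\neq 0$: otherwise Lemma \ref{L identidade ss B n�o constante}(a) would give $L(e^{\phi})=\Phi(\phi)e^{\phi}=0$ for every $\phi\in E^{\prime}$, and the density of ${\rm span}\{e^{\phi}:\phi\in E^{\prime}\}$ (Proposition \ref{densidade de ephi b}) together with the continuity of $L$ would force $L=0$.

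The key computation is that, under the Borel isomorphism $\mathcal{B}\colon[\mathcal{H}_{\Theta b}(E)]^{\prime}\to{\rm Exp}_{\Theta^{\prime}}(E^{\prime})$ of Proposition \ref{Propo isomorfismo Hthetab}, the transpose $L^{t}$ is intertwined with multiplication by $\Phi$. Indeed, for $S\in[\mathcal{H}_{\Theta b}(E)]^{\prime}$ and $\phi\in E^{\prime}$, Lemma \ref{L identidade ss B n�o constante}(a) yields
\[
\mathcal{B}(L^{t}S)(\phi)=(L^{t}S)(e^{\phi})=S(L(e^{\phi}))=\Phi(\phi)\,S(e^{\phi})=\Phi(\phi)\,\mathcal{B}(S)(\phi),
\]
so that $\mathcal{B}(L^{t}S)=\Phi\cdot\mathcal{B}(S)$. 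Because $\Phi\neq 0$ and a nonzero holomorphic function on $E^{\prime}$ is not a zero divisor, $L^{t}$ is injective (equivalently, $L$ has dense range, as already obtained in Proposition \ref{dense_range}). Transporting along $\mathcal{B}$, the range of $L^{t}$ corresponds to $\Phi\cdot{\rm Exp}_{\Theta^{\prime}}(E^{\prime})$, and the division hypothesis is precisely what identifies this with $\{F\in{\rm Exp}_{\Theta^{\prime}}(E^{\prime}):F/\Phi\text{ is holomorphic}\}$: the inclusion $\subseteq$ is automatic, while $\supseteq$ uses that $F/\Phi$ lies in ${\rm Exp}_{\Theta^{\prime}}(E^{\prime})$ as soon as it is holomorphic.

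It then remains to prove that ${\rm Im}(L^{t})$ is weak-$\ast$ closed, which I expect to be the main obstacle. I would reduce to equicontinuous sets via the Banach--Dieudonn\'e (Krein--Smulian) theorem, so that it is enough to show ${\rm Im}(L^{t})\cap U^{\circ}$ is weak-$\ast$ closed for each $0$-neighborhood $U$, where $U^{\circ}$ is the polar. On such a polar the estimate $\|e^{\phi}\|_{\Theta,\rho}\leq e^{K\rho\|\phi\|}$, which follows from condition (a1) of Definition \ref{pi-tipo de holomorfia} (since $\hat d^{m}(e^{\phi})(0)=\phi^{m}$), shows that the Borel transforms $\{\mathcal{B}(S):S\in U^{\circ}\}$ are uniformly bounded on bounded subsets of $E^{\prime}$, hence form a normal family. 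As weak-$\ast$ convergence $S_{\alpha}\to S$ is exactly pointwise convergence $\mathcal{B}(S_{\alpha})(\phi)=S_{\alpha}(e^{\phi})\to S(e^{\phi})=\mathcal{B}(S)(\phi)$, a Vitali--Montel argument upgrades it to locally uniform convergence; consequently derivatives converge too, divisibility by $\Phi$ (to the correct order along the zero set of $\Phi$) is preserved in the limit, and $\mathcal{B}(S)/\Phi$ is holomorphic. The division hypothesis then places this quotient in ${\rm Exp}_{\Theta^{\prime}}(E^{\prime})$, giving $S\in{\rm Im}(L^{t})$.

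The genuinely delicate points are the passage of divisibility to the limit --- which requires the normal-families and removable-singularity argument across the zero set of $\Phi$ in the infinite-dimensional holomorphic setting --- and making the Krein--Smulian reduction rigorous; everything else is formal bookkeeping. With ${\rm Im}(L^{t})$ weak-$\ast$ closed and $L^{t}$ injective, the duality criterion yields at once that $L$ is surjective, completing the proof.
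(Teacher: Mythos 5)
Your proposal has the right skeleton, and at that level it coincides with the Malgrange--Gupta-style argument the paper invokes (the paper's ``proof'' of Theorem \ref{conv_op_surjective} is a pointer to \cite[Theorem 4.4]{favaro-jatoba1}): the Fr\'echet duality criterion (surjectivity of $L$ $\Leftrightarrow$ $L^{t}$ injective and ${\rm Im}(L^{t})$ weak-$\ast$ closed), the intertwining $\mathcal{B}(L^{t}S)=\Phi\cdot\mathcal{B}(S)$ with $\Phi=\mathcal{B}(\Gamma_{\Theta}(L))\neq 0$, injectivity of $L^{t}$ from the identity theorem, and the division hypothesis to identify $\mathcal{B}({\rm Im}\,L^{t})=\Phi\cdot{\rm Exp}_{\Theta^{\prime}}(E^{\prime})$ with $\{F\in{\rm Exp}_{\Theta^{\prime}}(E^{\prime}):F/\Phi \mbox{ holomorphic}\}$. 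All of this is correct. (A small slip: your parenthetical appeal to Proposition \ref{dense_range} imports a separability hypothesis that Theorem \ref{conv_op_surjective} does not make; your direct integral-domain argument makes that citation unnecessary anyway.)

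The genuine gap is exactly the step you defer: weak-$\ast$ closedness, specifically the claim that ``divisibility by $\Phi$ to the correct order is preserved in the limit.'' In one variable this is a Hurwitz-type triviality because zeros of $\Phi$ are isolated; on an infinite-dimensional $E^{\prime}$ the zero set of $\Phi$ is a hypersurface, there is no elementary notion of order along it, and no removable-singularity theorem can even be invoked until one proves that the candidate quotient is \emph{locally bounded} near that set --- which is not automatic and is where all the work lies. The only known way to execute your step is: (i) restrict to complex lines, where locally uniform convergence does preserve orders of vanishing, and then (ii) apply a division lemma: if $F,\Phi$ are entire on a Banach space, $\Phi\neq 0$, and on every complex line $F$ vanishes at least to the order of $\Phi$, then $F/\Phi$ extends to an entire function (local boundedness via a maximum-principle estimate on circles transverse to the zero set, holomorphy on finite-dimensional sections by the finite-dimensional removable-singularity theorem, then Fr\'echet holomorphy). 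That lemma is the technical core of the Gupta--Matos machinery on which \cite[Theorem 4.4]{favaro-jatoba1} rests, and you neither prove nor cite it. Note also that the paper's route gets the linewise vanishing more cheaply and avoids normal families altogether: by the bipolar theorem, ${\rm Im}(L^{t})$ is weak-$\ast$ closed iff it equals $({\rm Ker}\,L)^{\perp}$, and for $S\in({\rm Ker}\,L)^{\perp}$ the linewise order condition on $\mathcal{B}S$ follows because the exponential-polynomials $\psi^{j}e^{\phi}$ attached to a zero of $\lambda\mapsto\Phi(\phi+\lambda\psi)$ lie in ${\rm Ker}(L)$. Finally, your sketch never uses the $\pi_{2}$ hypothesis of the theorem at all --- a warning sign that the difficulty has been displaced into the step you left open rather than resolved.
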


\begin{example}
\rm (a) Let $E^{\prime}$ have the bounded
approximation property and $\left(\mathcal{P}_{N}(^{m}E)\right)_{m=0}^{\infty}$ be the holomorphy type of nuclear homogeneous polynomials on $E$. To see that this is a $\pi_{1}$-$\pi_{2}$-holomorphy type, regard it as a particular case of the quasi-nuclear holomorphy types considered in Example \ref{exemplos}(a) or see it directly in \cite[page 15 and Lemma 7.2]{apostilagupta}. By \cite[Proposition 7.2]{apostilagupta}, in this case the role of ${\rm Exp}_{\Theta^{\prime}}(E^{\prime})$ is played by the space ${\rm Exp}(E^{\prime})$ of all  entire mappings of exponential-type on $E'$ \cite[Definition 7.5]{apostilagupta}. 
Also, ${\rm Exp}(E^{\prime})$ is closed under division \cite[Proposition 8.1]{apostilagupta}. Hence, it follows from Theorem \ref{conv_op_surjective} that every nonzero convolution operator on $\mathcal{H}_{N b}(E)$ is surjective.

\medskip

\noindent(b) As we saw in Example \ref{exemplos}(a), if $E^{\prime}$ has the bounded
approximation property, then $\left(\mathcal{P}_{\widetilde{N},\left( s;\left( r,q\right)
\right)}(^{m}E)\right)_{m=0}^{\infty}$ is a $\pi_{1}$-$\pi_{2}$-holomorphy type, and, according to the duality (\ref{duality}), in this case the role of ${\rm Exp}_{\Theta^{\prime}}(E^{\prime})$ is played by ${\rm Exp}_{\left( s^{\prime},m\left(
r^{\prime};q^{\prime}\right) \right)}(E^{\prime})$. 
Making $A=B=0$ in \cite[Theorem 3.8]{belgica} one gets that ${\rm Exp}_{\left( s^{\prime},m\left(
r^{\prime};q^{\prime}\right) \right)}(E^{\prime})$ is closed
under division (alternatively, see
 \cite[Theorem 5.4.8]{matos7}). Hence, it follows from Theorem \ref{conv_op_surjective} that every nonzero convolution operator on $\mathcal{H}_{\widetilde{N},\left( s;\left( r,q\right)
\right) b}(E)$ is surjective.
\end{example}

Now we establish a connection with the fashionable subject of lineability (for detailed information see, e.g., \cite{seoane} and references therein).

\begin{definition}\rm A subset $A$ of an infinite-dimensional topological vector space $E$ is said to be {\it dense-lineable in $E$} if $A \cup \{0\}$ contains a dense subspace of $E$.
\end{definition}

Next result is closely related to (actually is a generalization of) \cite[Corollary 12]{aronmarkose}:

\begin{proposition}
\label{dense-lineable}Let $E'$ be separable, $(\mathcal{P}_{\Theta}(^{n}E))_{n=0}^{\infty
}$ be a $\pi_{1}$-holomorphy type from $E$ to $\mathbb{C}$ and $L$ be a convolution operator on $\mathcal{H}_{\Theta b}(E)$ that is not a scalar multiple of the identity. Then the set of hypercyclic functions for $L$ is dense-lineable in $\mathcal{H}_{\Theta b}(E)$.
\end{proposition}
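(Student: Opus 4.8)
The plan is to use the classical Herrero--Bourdon technique: exhibit a single dense $L$-invariant subspace all of whose nonzero vectors are hypercyclic for $L$. First I would invoke Theorem \ref{main1}: since $E^{\prime}$ is separable, $\Theta$ is a $\pi_1$-holomorphy type, and $L$ is not a scalar multiple of the identity, $L$ is hypercyclic, so it admits a hypercyclic function $f_0\in\mathcal{H}_{\Theta b}(E)$. I would then consider
\[
M=\operatorname{span}\{L^{n}f_0:n\in\mathbb{N}_0\}=\{p(L)f_0:p\in\mathbb{C}[t]\},
\]
which is a linear subspace, is $L$-invariant, and is dense in $\mathcal{H}_{\Theta b}(E)$ because it already contains the dense orbit $\{L^{n}f_0:n\in\mathbb{N}_0\}$. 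The task then reduces to showing that every nonzero element of $M$ is a hypercyclic function for $L$; granting this, $M$ is a subspace contained in the set of hypercyclic functions together with $\{0\}$, which is exactly dense-lineability.

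The verification rests on two soft facts. First, if $S\colon\mathcal{H}_{\Theta b}(E)\to\mathcal{H}_{\Theta b}(E)$ is continuous, commutes with $L$, and has dense range, then $S$ carries hypercyclic functions to hypercyclic functions: for the hypercyclic $f_0$ one has $\{L^{n}(Sf_0)\}=\{S(L^{n}f_0)\}$, so by continuity of $S$ and density of the orbit of $f_0$ we get $S(\mathcal{H}_{\Theta b}(E))=S\bigl(\overline{\{L^{n}f_0\}}\bigr)\subseteq\overline{\{S(L^{n}f_0)\}}$, and taking closures together with $\overline{S(\mathcal{H}_{\Theta b}(E))}=\mathcal{H}_{\Theta b}(E)$ shows the orbit of $Sf_0$ is dense. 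Second, for every nonzero polynomial $p$ the operator $p(L)$ has dense range: writing $p(L)$ as a constant times a product of commuting factors $L-\lambda I$, and noting that a product of commuting dense-range operators again has dense range, it suffices to prove that each $L-\lambda I$ has dense range. Applying the first fact to $S=p(L)$ (which commutes with $L$) then yields that $p(L)f_0$ is hypercyclic for every $p\neq 0$, i.e. every nonzero element of $M$ is hypercyclic.

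I expect the crux to be the density of the range of $L-\lambda I$ for each $\lambda\in\mathbb{C}$, which is the only place where hypercyclicity of $L$ is used beyond producing $f_0$. I would argue by contradiction via Hahn--Banach: if the range of $L-\lambda I$ were not dense, there would be a nonzero $\psi\in[\mathcal{H}_{\Theta b}(E)]^{\prime}$ vanishing on it, so that the transpose satisfies $L^{\prime}\psi=\lambda\psi$. Evaluating along the orbit of $f_0$ gives $\psi(L^{n}f_0)=\lambda^{n}\psi(f_0)$ for all $n$, and since $\{L^{n}f_0\}$ is dense while $\psi$ is continuous and nonzero, the countable set $\{\lambda^{n}\psi(f_0):n\in\mathbb{N}_0\}$ would have to be dense in $\mathbb{C}$; but a set of the form $\{c\lambda^{n}:n\in\mathbb{N}_0\}$ is never dense in $\mathbb{C}$, a contradiction. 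Hence the transpose $L^{\prime}$ has empty point spectrum and each $L-\lambda I$ has dense range. All remaining steps are purely topological and go through verbatim in this setting because $\mathcal{H}_{\Theta b}(E)$ is a separable Fr\'echet space, separability coming from $E^{\prime}$ being separable and $\Theta$ being a $\pi_1$-holomorphy type, exactly as in the proof of Theorem \ref{main1}.
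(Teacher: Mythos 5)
Your proof is correct, and its overall skeleton is the same Herrero--Bourdon scheme the paper uses: take a hypercyclic $f_0$, set $M=\{p(L)f_0: p\in\mathbb{C}[t]\}$, note $M$ is dense because it contains the orbit, and reduce everything to the fact that a continuous operator commuting with $L$ and having dense range maps hypercyclic vectors to hypercyclic vectors. The genuine difference lies in how you obtain dense range of $p(L)$ for $p\neq 0$. The paper observes that $\sum_{i=0}^{m}\lambda_i L^i$ is itself a nonzero \emph{convolution} operator (translation invariance passes to polynomials in $L$) and invokes its Proposition \ref{dense_range}, whose proof rests on the structure theory developed earlier: the eigenvector identity $L(e^{\phi})=\mathcal{B}(\Gamma_{\Theta}(L))(\phi)\cdot e^{\phi}$ and the density of ${\rm span}\{e^{\phi}:\phi\in E'\}$ via the Borel transform. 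You instead factor $p(L)$ into commuting linear factors $L-\lambda I$ and rule out non-dense range by Hahn--Banach: a nonzero functional $\psi$ annihilating the range would be an eigenvector of the transpose, forcing the set $\{\lambda^{n}\psi(f_0):n\in\mathbb{N}_0\}=\psi(\{L^n f_0\})$ to be dense in $\mathbb{C}$, which a geometric sequence never is. This is exactly Bourdon's argument that the transpose of a hypercyclic operator has no eigenvalues, and it is valid in any separable locally convex (in particular Fr\'echet) space; so your route is more self-contained and more general, using the $\pi_1$ and separability hypotheses only through Theorem \ref{main1}, with no further appeal to the convolution or holomorphy-type structure. What the paper's route buys in exchange is brevity given its machinery, plus the standalone and stronger Proposition \ref{dense_range} (dense range for \emph{all} nonzero convolution operators, not just polynomials in $L$), which it needs anyway elsewhere. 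One cosmetic remark: commutativity of the factors is not actually needed for the product step, since a composition of continuous dense-range operators always has dense range.
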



\begin{proof}
The convolution operator $L$ is hypercyclic by Theorem \ref{main1}, so we can take a hypercyclic function $f$
for $L$. Define%
\[
M=\left\{  \sum\limits_{i=0}^{m}\lambda_{i}L^{i}\left(  f\right)  :m\in\mathbb{N}_{0}, \lambda_0, \lambda
_{1}, \ldots, \lambda_m \in\mathbb{C}\right\}  ,
\]
where $L^{0}$ denotes the identity on $\mathcal{H}_{\Theta b}(E).$ Clearly $M$ is a vector subspace of $\mathcal{H}_{\Theta b}(E)$ and,
since $\left\{  L^{n}\left(  f\right)  :n\in\mathbb{N}_{0}\right\}  $ is
contained in $M$, $M$ is a dense subset of $\mathcal{H}_{\Theta b}(E)$. Now we only have to prove that every nonzero element of $M$ is hypercyclic for $L$, that is,  for every $g\in M,$ $g\neq0,$
the set $\left\{  g,L\left(  g\right)  ,\ldots,L^{n}\left(
g\right)  ,\ldots\right\}  $ is dense in $\mathcal{H}_{\Theta b}(E).$ If $g\in M,$ $g\neq0,$ then $g=\sum\limits_{i=0}^{m}\lambda_{i}L^{i}(f)$, with $\lambda_{0},\lambda_{1},\ldots,\lambda_{m}\in\mathbb{C}.$ Note that $\sum\limits_{i=0}^{m}\lambda_{i}L^{i}\neq0$ because $g\neq0$. Since $\sum\limits_{i=0}%
^{m}\lambda_{i}L^{i}$ is a convolution operator, it follows from Proposition \ref{dense_range} that $\sum\limits_{i=0}^{m}\lambda_{i}L^{i}$ has dense range.
Using that $\left\{  L^{n}\left(  f\right)  :n\in\mathbb{N}_{0}\right\}  $ is dense
in $\mathcal{H}_{\Theta b}(E)$ and that $\sum\limits_{i=0}^{m}\lambda_{i}L^{i}$ is
continuos and has dense range, we conclude that the set%
\[
\sum\limits_{i=0}^{m}\lambda_{i}L^{i}\left(  \left\{  L^{n}\left(  f\right)
:n\in\mathbb{N}_{0}\right\}  \right)
\]
is dense in $\mathcal{H}_{\Theta b}(E).$ So,
\begin{align*}
\{g, L(g), \ldots, L^n(g), \ldots, \} &=\left\{  L^{n}\left(g\right)  :n\in\mathbb{N}_{0}\right\}\\&=\left\{  L^{n}\left(  \sum\limits_{i=0}^{m}\lambda_{i}L^{i}\left(  f\right)
\right)  :n\in\mathbb{N}_{0}\right\}\\&  =\sum\limits_{i=0}^{m}\lambda_{i}%
L^{i}\left(  \left\{  L^{n}\left(  f\right)  :n\in\mathbb{N}_{0}\right\}
\right)
\end{align*}
is dense in $\mathcal{H}_{\Theta b}(E)$, proving that $g$ is hypercyclic for $L$.
\end{proof}

Combining Theorem \ref{main2} with Proposition \ref{dense-lineable} we get:

\begin{corollary}
Let $E^{\prime}$ be
separable, $(\mathcal{P}_{\Theta}(^{m}E))_{m=0}^{\infty}$ be a
$\pi_1$-$\pi_2$-holomorphy type and $T\in[\mathcal{H}_{\Theta
b}(E)]^{\prime}$ be a linear functional which is not a scalar multiple of $\delta_0$. Then the set of hypercyclic functions for $\bar{\Gamma}_{\Theta}(T)$ is dense-lineable in $\mathcal{H}_{\Theta b}(E)$.
\end{corollary}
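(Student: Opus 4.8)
The plan is to show that the final statement follows by directly combining the two results it cites, exactly as the sentence preceding it advertises. So there is essentially nothing new to prove: Theorem \ref{main2} supplies hypercyclicity, and Proposition \ref{dense-lineable} upgrades mere existence of a hypercyclic vector to dense-lineability of the whole set of hypercyclic vectors. I would first invoke Theorem \ref{main2} to the functional $T$, whose hypotheses match the corollary's verbatim: $E'$ separable, $(\mathcal{P}_{\Theta}(^{m}E))_{m=0}^{\infty}$ a $\pi_1$-$\pi_2$-holomorphy type, and $T$ not a scalar multiple of $\delta_0$. Theorem \ref{main2} then tells us that $\bar{\Gamma}_{\Theta}(T)$ is a convolution operator which is not a scalar multiple of the identity.

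Next I would feed this into Proposition \ref{dense-lineable}. That proposition requires $E'$ separable, a $\pi_1$-holomorphy type, and a convolution operator $L$ on $\mathcal{H}_{\Theta b}(E)$ that is not a scalar multiple of the identity; it concludes that the set of hypercyclic functions for $L$ is dense-lineable in $\mathcal{H}_{\Theta b}(E)$. Here the one small check worth making explicit is that a $\pi_1$-$\pi_2$-holomorphy type is in particular a $\pi_1$-holomorphy type, so the weaker hypothesis of Proposition \ref{dense-lineable} is satisfied; and the operator $L = \bar{\Gamma}_{\Theta}(T)$ produced in the previous paragraph is exactly the convolution operator (not a scalar multiple of the identity) that the proposition demands. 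Applying the proposition with this $L$ yields the conclusion directly.

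There is no real obstacle: the entire argument is a two-line citation of already-established results, with only the trivial observation that $\pi_1$-$\pi_2$ implies $\pi_1$. The substantive work has been discharged upstream — the hypercyclicity in Theorem \ref{main2} (via the Borel transform and the Hypercyclicity Criterion) and the dense-range/lineability machinery in Proposition \ref{dense-lineable} (which itself leans on Proposition \ref{dense_range} and Theorem \ref{main1}). I would therefore write the proof as a single short paragraph: apply Theorem \ref{main2} to conclude $\bar{\Gamma}_{\Theta}(T)$ is a convolution operator that is not a scalar multiple of the identity, then apply Proposition \ref{dense-lineable} with $L = \bar{\Gamma}_{\Theta}(T)$ to conclude that the set of hypercyclic functions for $\bar{\Gamma}_{\Theta}(T)$ is dense-lineable in $\mathcal{H}_{\Theta b}(E)$.
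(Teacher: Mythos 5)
Your proposal is correct and matches the paper's own reasoning exactly: the paper states this corollary with no separate proof, introducing it with ``Combining Theorem \ref{main2} with Proposition \ref{dense-lineable} we get,'' which is precisely your two-step citation (including the implicit observation that a $\pi_1$-$\pi_2$-holomorphy type is in particular a $\pi_1$-holomorphy type). Nothing is missing.
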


A result similar to \cite[Proposition 4.1]{CDSjmaa} is the following:

\begin{proposition}
\label{unique_functional}Let $(\mathcal{P}_{\Theta}(^{m}E))_{m=0}^{\infty
}$ be a $\pi_{1}$-holomorphy type from $E$ to $\mathbb{C}$. Then for every convolution operator $L\colon\mathcal{H}_{\Theta
b}(E)\longrightarrow\mathcal{H}_{\Theta b}(E),$ the functional $\Gamma_\Theta (L)$ is the unique functional in $[\mathcal{H}_{\Theta
b}(E)]^{\prime}$ such that $L(f)=\Gamma_\Theta (L)\ast f$ for every $f\in \mathcal{H}_{\Theta b}(E)$.
\end{proposition}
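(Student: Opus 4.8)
The plan is to prove existence and uniqueness separately. For existence, I would show that $L(f) = \Gamma_\Theta(L) \ast f$ for every $f \in \mathcal{H}_{\Theta b}(E)$. The natural strategy is to verify this identity first on the dense set $\{e^\phi : \phi \in E'\}$ (dense by Proposition \ref{densidade de ephi b}) and then extend by continuity, since both $L$ and the map $f \mapsto \Gamma_\Theta(L) \ast f$ are continuous linear operators (the latter being a convolution operator, though note this uses a $\pi_2$-hypothesis via Proposition \ref{Teorema T* convolucao}; I would need to either assume $\pi_2$ here or argue that $\Gamma_\Theta(L) \ast f$ makes sense and is continuous under the stated $\pi_1$-only hypothesis). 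On the exponentials, I would compute directly: using the definition of the convolution product and Lemma \ref{L identidade ss B no constante}(a),
\[
(\Gamma_\Theta(L) \ast e^\phi)(x) = \Gamma_\Theta(L)(\tau_{-x} e^\phi) = \Gamma_\Theta(L)(e^{\phi(x)} e^\phi) = e^{\phi(x)} \Gamma_\Theta(L)(e^\phi).
\]
Since $\Gamma_\Theta(L)(e^\phi) = \mathcal{B}(\Gamma_\Theta(L))(\phi)$ by Proposition \ref{Propo isomorfismo Hthetab}, and $L(e^\phi)(x) = \mathcal{B}(\Gamma_\Theta(L))(\phi) \cdot e^{\phi(x)}$ by Lemma \ref{L identidade ss B no constante}(a), the two agree on every exponential.

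For uniqueness, suppose $S \in [\mathcal{H}_{\Theta b}(E)]'$ also satisfies $L(f) = S \ast f$ for all $f$. Then $\Gamma_\Theta(L) \ast f = S \ast f$ for every $f$. Evaluating at $x = 0$ gives $(\Gamma_\Theta(L) \ast f)(0) = (S \ast f)(0)$, i.e. $\Gamma_\Theta(L)(\tau_0 f) = S(\tau_0 f)$, and since $\tau_0 f = f$ this yields $\Gamma_\Theta(L)(f) = S(f)$ for every $f \in \mathcal{H}_{\Theta b}(E)$. Hence $S = \Gamma_\Theta(L)$, which is precisely the uniqueness claim. This is the clean part of the argument and requires only evaluating the convolution at the origin.

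The main obstacle I anticipate is the existence half under the bare $\pi_1$-hypothesis stated in the proposition. Proposition \ref{Teorema T* convolucao}, which guarantees that $T \ast f \in \mathcal{H}_{\Theta b}(E)$ and that $T\ast$ is a genuine continuous convolution operator, is proved assuming a $\pi_2$-holomorphy type. So to run the density-plus-continuity argument one needs to know that $\Gamma_\Theta(L) \ast f$ lands back in $\mathcal{H}_{\Theta b}(E)$ and depends continuously on $f$; with only $\pi_1$ available, I would instead fix the target function to be $L(f)$ itself (which automatically lies in $\mathcal{H}_{\Theta b}(E)$ since $L$ is a convolution operator) and show the two continuous functionals $f \mapsto L(f)(x)$ and $f \mapsto \Gamma_\Theta(L)(\tau_{-x} f)$ agree pointwise in $x$. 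Concretely, for fixed $x$, both $f \mapsto L(f)(x)$ and $f \mapsto (\Gamma_\Theta(L) \ast f)(x) = \Gamma_\Theta(L)(\tau_{-x} f)$ are continuous linear functionals on $\mathcal{H}_{\Theta b}(E)$ (the latter because $\tau_{-x}$ is continuous by \cite[Proposition 2.2]{favaro-jatoba1} and $\Gamma_\Theta(L)$ is continuous), and they coincide on the dense set of exponentials, hence everywhere; letting $x$ range over $E$ gives $L(f) = \Gamma_\Theta(L) \ast f$ as functions, with the right-hand side automatically equal to $L(f) \in \mathcal{H}_{\Theta b}(E)$. This reformulation sidesteps the need for $\pi_2$ and keeps the whole argument within the $\pi_1$ framework of the proposition.
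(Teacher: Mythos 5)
Your proof is correct, but both halves take a genuinely different route from the paper's. For existence the paper uses no approximation argument at all: translation invariance of $L$ gives, for every $f\in\mathcal{H}_{\Theta b}(E)$ and $x\in E$,
\[
L(f)(x)=[\tau_{-x}L(f)](0)=[L(\tau_{-x}f)](0)=\Gamma_{\Theta}(L)(\tau_{-x}f)=(\Gamma_{\Theta}(L)\ast f)(x),
\]
a purely formal computation from the definitions of convolution operator, of $\Gamma_{\Theta}$ and of the convolution product; it needs neither the $\pi_1$ hypothesis, nor density of exponentials, nor any continuity of $\tau_{-x}$ or of point evaluations, and it dissolves the $\pi_2$ worry (Proposition \ref{Teorema T* convolucao}) for the same reason your version does: the convolution equals $L(f)$ and hence automatically lies in $\mathcal{H}_{\Theta b}(E)$. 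Your density argument is a valid substitute: for fixed $x$ the functionals $f\mapsto L(f)(x)$ and $f\mapsto\Gamma_{\Theta}(L)(\tau_{-x}f)$ are indeed $\tau_{\Theta}$-continuous (point evaluations are continuous since $\|\cdot\|\leq\sigma^{m}\|\cdot\|_{\Theta}$, and $\tau_{-x}$ is continuous by the estimate $\|\tau_{a}f\|_{\Theta,\rho}\leq\|f\|_{\Theta,\sigma(\rho+\|a\|)}$, which follows from the holomorphy-type axiom and is implicit in the proof of \cite[Proposition 2.2]{favaro-jatoba1}), and they agree on the span of exponentials, which is dense by Proposition \ref{densidade de ephi b}, using the lemma $L(e^{\phi})=\mathcal{B}(\Gamma_{\Theta}(L))(\phi)\cdot e^{\phi}$; but this spends the $\pi_1$ hypothesis and extra machinery on a statement the paper gets for free, valid for arbitrary holomorphy types. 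For uniqueness the roles reverse: the paper tests $S$ against the exponentials and invokes injectivity of the Borel transform (Proposition \ref{Propo isomorfismo Hthetab}), whereas your observation that evaluating $L(f)=S\ast f$ at the origin yields $S(f)=(S\ast f)(0)=L(f)(0)=\Gamma_{\Theta}(L)(f)$ is more elementary and needs neither $\pi_1$ nor the Borel transform. In sum, you prove existence the hard way and uniqueness the easy way, while the paper does exactly the opposite; both proofs are complete.
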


\begin{proof}
Let $L\colon\mathcal{H}_{\Theta b}(E)\longrightarrow\mathcal{H}_{\Theta b}(E)$ be a convolution operator. By the definition of $\Gamma_\Theta$ we have that $\Gamma_\Theta\in [\mathcal{H}_{\Theta b}(E)]^\prime$ and%
\begin{align*}
L(f)(x)&=L(f)(0-(-x))=[\tau_{-x} L(f)](0)\\&=L(\tau_{-x} f)(0)=\Gamma_\Theta(L)(\tau_{-x}f)\\&=\Gamma_\Theta(L)\ast f(x)
\end{align*}
for all $f\in \mathcal{H}_{\Theta b}(E)$ and $x\in E$. Thus, $L(f)=\Gamma_\Theta (L)\ast f$ for every $f\in \mathcal{H}_{\Theta b}(E)$. Let us prove the uniqueness: if $S\in[\mathcal{H}_{\Theta
b}(E)]^{\prime}$ is such that $L(f)=S\ast f$, then
\[
L(e^{\phi})(x)=S\ast e^{\phi}(x)=S(\tau_{-x}e^{\phi})=S(e^{\phi})\cdot
e^{\phi(x)}=\mathcal{B}S(\phi)\cdot e^{\phi(x)}
\]
for all $\phi\in E^\prime$ and $x\in E$. Hence $L(e^\phi)=\mathcal{B}S(\phi)\cdot e^{\phi} $ for every $\phi\in E^\prime$. It follows from Lemma \ref{L identidade ss B não constante}(a) that $\mathcal{B}(\Gamma_{\Theta}(L))(\phi)=\mathcal{B}S(\phi)$ for every $\phi\in E^\prime$. So $S=\Gamma_{\Theta}(L)$ by the injectivity of the Borel transform (Proposition \ref{Propo isomorfismo Hthetab}).
\end{proof}



We finish the paper exploring the multiplicative structure of $[\mathcal{H}_{\Theta b}(E),\tau_{\Theta}%
]^{\prime}$:

\begin{definition}\rm
\textrm{Let $(\mathcal{P}_{\Theta}(^{m}E))_{m=0}^{\infty}$ be a $\pi_{2}%
$-holomorphy type from $E$ to $\mathbb{C}$. For $T_{1},T_{2}\in\lbrack\mathcal{H}_{\Theta
b}(E)]^{\prime}$ we define the \emph{convolution product} of $T_{1}$  and
$T_{2}$ in $[\mathcal{H}_{\Theta b}(E)]^{\prime}$ by%
\[
T_{1}\ast T_{2}:=\Gamma_{\Theta}({O}_{1}\circ {O}_{2}%
)\in\lbrack\mathcal{H}_{\Theta b}(E)]^{\prime},
\]
where $O_{1}=T_{1}\ast$ and $O_{2}=T_{2}\ast$. }
\end{definition}
}
It is easy to see that $[\mathcal{H}_{\Theta b}(E)]^{\prime}$ is an algebra
under this convolution product with unity $\delta_0$. Furthermore, the convolution product satisfies the following property:
$$\left(  T_{1}\ast T_{2}\right)  \ast
f=T_{1}\ast\left(  T_{2}\ast f\right),  $$
for all $T_{1},T_{2}\in\lbrack\mathcal{H}_{\Theta
b}(E)]^{\prime}$ and $f \in \mathcal{H}_{\Theta b}(E)$.

The same proof of \cite[Theorem 3.3]{favaro-jatoba1} provides the following analogue of \cite[Corollary 4.2]{CDSjmaa}:

\begin{theorem}
\label{isomorfismo algebras}If $(\mathcal{P}_{\Theta}(^{m}E))_{m=0}^{\infty}$
is a $\pi_{1}$-$\pi_{2} $-holomorphy type, then the Borel transform is an
algebra isomorphism between $[\mathcal{H}_{\Theta b}(E),\tau_{\Theta}%
]^{\prime}$ and ${\rm Exp}_{\Theta^{\prime}}(E^{\prime}).$
\end{theorem}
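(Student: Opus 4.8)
\textbf{Proof proposal for Theorem \ref{isomorfismo algebras}.}

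The plan is to upgrade the algebraic isomorphism of Proposition \ref{Propo isomorfismo Hthetab} to an algebra isomorphism by showing that the Borel transform $\mathcal{B}$ carries the convolution product on $[\mathcal{H}_{\Theta b}(E)]^{\prime}$ to the pointwise product on $\mathrm{Exp}_{\Theta^{\prime}}(E^{\prime})$. Since $\Theta$ is in particular a $\pi_{1}$-holomorphy type, Proposition \ref{Propo isomorfismo Hthetab} already gives that $\mathcal{B}\colon [\mathcal{H}_{\Theta b}(E)]^{\prime}\longrightarrow \mathrm{Exp}_{\Theta^{\prime}}(E^{\prime})$ is a linear bijection; so the only thing left to verify is the multiplicative identity
\[
\mathcal{B}(T_{1}\ast T_{2})=\mathcal{B}(T_{1})\cdot\mathcal{B}(T_{2})
\]
for all $T_{1},T_{2}\in[\mathcal{H}_{\Theta b}(E)]^{\prime}$, the right-hand side being the ordinary product of the two entire functions on $E^{\prime}$.

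First I would evaluate both sides at an arbitrary $\phi\in E^{\prime}$, using the defining formula $\mathcal{B}T(\phi)=T(e^{\phi})$. For the right-hand side this is immediate: $\big(\mathcal{B}(T_{1})\cdot\mathcal{B}(T_{2})\big)(\phi)=T_{1}(e^{\phi})\,T_{2}(e^{\phi})$. For the left-hand side I would unwind the definition of the convolution product: writing $O_{j}=T_{j}\ast$ and recalling that $T_{1}\ast T_{2}=\Gamma_{\Theta}(O_{1}\circ O_{2})=\delta_{0}\circ(O_{1}\circ O_{2})$, I get
\[
\mathcal{B}(T_{1}\ast T_{2})(\phi)=\Gamma_{\Theta}(O_{1}\circ O_{2})(e^{\phi})=\big[\,O_{1}(O_{2}(e^{\phi}))\,\big](0)=\big[\,T_{1}\ast(T_{2}\ast e^{\phi})\,\big](0).
\]
The key computational input is Lemma \ref{L identidade ss B n\'{a}o constante}(a) applied to the convolution operator $O_{2}=T_{2}\ast$: since $\Gamma_{\Theta}(O_{2})=T_{2}$, it yields $T_{2}\ast e^{\phi}=O_{2}(e^{\phi})=\mathcal{B}(T_{2})(\phi)\cdot e^{\phi}$. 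Substituting this and using linearity of $O_{1}=T_{1}\ast$ gives $O_{1}(O_{2}(e^{\phi}))=\mathcal{B}(T_{2})(\phi)\cdot(T_{1}\ast e^{\phi})=\mathcal{B}(T_{2})(\phi)\cdot\mathcal{B}(T_{1})(\phi)\cdot e^{\phi}$; evaluating at $0$ and using $e^{\phi}(0)=1$ produces exactly $\mathcal{B}(T_{1})(\phi)\,\mathcal{B}(T_{2})(\phi)$, which matches the right-hand side.

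Thus the identity holds on every $\phi\in E^{\prime}$, i.e.\ as functions on $E^{\prime}$, which is what is required since the product in $\mathrm{Exp}_{\Theta^{\prime}}(E^{\prime})$ is pointwise. Combined with the bijectivity and linearity already supplied by Proposition \ref{Propo isomorfismo Hthetab}, this shows $\mathcal{B}$ is an algebra isomorphism. I do not anticipate a serious obstacle: the whole argument rests on the two facts that Lemma \ref{L identidade ss B n\'{a}o constante}(a) applies to $T_{2}\ast$ as a convolution operator and that $T_{1}\ast$ is linear, together with Proposition \ref{Teorema T* convolucao} guaranteeing that each $T_{j}\ast$ genuinely maps $\mathcal{H}_{\Theta b}(E)$ into itself so the compositions are well defined. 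The mildest point of care is making sure that $\mathcal{B}(T_{2})(\phi)$ is being treated as a scalar that can be pulled out of the linear map $O_{1}$, which is immediate once $T_{2}\ast e^{\phi}$ has been identified as a scalar multiple of $e^{\phi}$.
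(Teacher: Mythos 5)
Your proof is correct and takes essentially the same route as the paper's: the paper simply invokes the proof of \cite[Theorem 3.3]{favaro-jatoba1}, which is exactly your argument --- linearity and bijectivity of $\mathcal{B}$ from Proposition \ref{Propo isomorfismo Hthetab}, then multiplicativity checked on exponentials via $T\ast e^{\phi}=\mathcal{B}T(\phi)\cdot e^{\phi}$, giving $\mathcal{B}(T_{1}\ast T_{2})(\phi)=[T_{1}\ast(T_{2}\ast e^{\phi})](0)=\mathcal{B}T_{1}(\phi)\,\mathcal{B}T_{2}(\phi)$. Your observation that Proposition \ref{Teorema T* convolucao} is what makes each $T_{j}\ast$ a genuine convolution operator (so the composition $O_{1}\circ O_{2}$ lies in the domain of $\Gamma_{\Theta}$ and the eigenvalue identity applies) is exactly where the $\pi_{2}$ hypothesis enters.
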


\bigskip

\noindent Faculdade de Matem\'atica\\
Universidade Federal de Uberl\^andia\\
38.400-902 - Uberl\^andia - Brazil\\
e-mails: bertoloto@famat.ufu.br, botelho@ufu.br,
vvfavaro@gmail.com,\\ marques@famat.ufu.br

\end{document}